\DeclarePairedDelimiter\floor{\lfloor}{\rfloor}
\theoremstyle{plain}
\newtheorem{theorem}{Theorem}[section]
\newtheorem{corollary}[theorem]{Corollary}
\newtheorem{proposition}[theorem]{Proposition}
\theoremstyle{definition}
\newtheorem{definition}[theorem]{Definition}
\newtheorem{example}[theorem]{Example}
\theoremstyle{remark}
\begin{document}
%\lipsum[1-20]
%\maketitle
\begin{center}
\large{On semicommutativity of  rings relative to hypercenter}
\end{center}
Nazeer  Ansari\footnote[1]{Corresponding author}\\
Department of Mathematics\\
Madanapalle Institute of Technology $\&$ Science\\
Madanapalle, Andhra  pradesh, 517325\\
email:drnazeeransari@mits.ac.in\\
\vspace*{.5 cm}

Kh. Herachandra singh\\
Department of Mathematics\\
Manipur University, Canchipur\\
Imphal, Manipur, 795003\\
 email: heramath@manipuruniv.ac.in\\
 \vspace*{.5 cm}
\justify
\begin{abstract}
Armendariz and semicommutative rings are generalizations of reduced rings. In \cite{IN}, I.N. Herstein introduced the notion of a hypercenter of a ring to generalize the center subclass. For a ring $R$, an element $a \in R$ is called hypercentral if $ax^{n}=x^{n}a$ for all $x \in R$ and for some $n=n(x,a) \in \mathbb{N}$. Motivated by this definition, we introduce $\mathscr{H}$-Semicommutative rings as a generalization of semicommutative rings and investigate their relations with other classes of rings. We have proven that the class of $\mathscr{H}$-Semicommutative rings lies strictly between Zero-Insertive rings (ZI) and Abelian rings. Additionally, we have demonstrated that if $R$ is $\mathscr{H}$-semicommutative, then for any $n \in \mathbb{N}$, the matrix subring $S_{n}^{'}(R)$ is also $\mathscr{H}$-semicommutative. Among other significant results, we have established that if $R$ is $\mathscr{H}$-semicommutative and left $SF$, then $R$ is strongly regular. We have also shown that $\mathscr{H}$-semicommutative rings are 2-primal, providing sufficient conditions for a ring $R$ to be nil-singular. Additionally, we have proven that if every simple singular module over $R$ is wnil-injective and $R$ is $\mathscr{H}$-semicommutative, then $R$ is reduced. Furthermore, we have studied the relationship of $\mathscr{H}$-semicommutative rings with the classes of Baer, Quasi-Baer, p.p. rings, and p.q. rings in this article, and we have provided some more relevant results.

\end{abstract}
{\bf 2020 MSC:} 16D80, 16U80.\\
{\bf keywords:} Hypercenter; semicommutative ring; $\mathscr{H}$-Semicommutative ring.

\section{Introduction} 

In this article, $R$ represents an associative ring with unity (unless $R$ is a nil ring). We write $rad(R)$, $Nil(R)$, $Nil_{\star}(R)$, $Nil^{\star}(R)$, $Z(R)$, and $T(R)$ to represent the Jacobson radical, the set of all nilpotent elements, the lower nilradical, the upper nilradical, the set of all central elements, and the set of all hypercentral elements of $R$, respectively. For a non-empty set $X\subseteq R$, an element $a \in R$ is called left(right)-annihilator if $aX=0(Xa=0)$. We also write $l_{R}(X)(R_{R}(X))$ to denote the set of all left(right)-annihilators for a non-empty set $X\subseteq R$. A ring $R$ is called {\it reduced} if $Nil(R) = (0)$. A ring $R$ is reversible if, for any $u, v \in R$, the condition $uv = 0$ implies $vu = 0$. A ring \( R \) is called {\it semicommutative } if, whenever \( u, v \in R \) satisfy \( uv = 0 \), the condition \( uRv = 0 \) holds.  {\it semicommutative} rings are a generalization of {\it reduced} rings. $R$ is called {\it directly finite} if for any $u, v \in R$, $uv = 1$ implies $vu = 1$. $R$ is called an $NI$-ring if $Nil(R)$ forms an ideal of $R$. Many authors have extensively studied {\it semicommutative} rings for the past few decades and developed many new generalizations. Hwang, in \cite{HH}, introduced the notion of an $R$-{\it IFP (insertion of factor property)} ring. $R$ is called $R$-{\it IFP} if, for any $u, v \in R$ with $v \neq 0$, $uv = 0$ implies $uRw = 0$ for some $0 \neq w \in R$. Chen, in \cite{CC}, introduced {\it weakly semicommutative} rings as a generalization of {\it semicommutative} rings. $R$ is called {\it weakly semicommutative} if the condition $uRv \in Nil(R)$ holds true whenever $u, v \in R$ satisfy $uv = 0$. In \cite{YJ}, $R$ is called {\it nil-semicommutative} if whenever $uv \in Nil(R)$ for some $u, v \in R$, then $uRv \subseteq Nil(R)$. We denote these rings as {\it nil-semicommutative-II} to avoid nomenclature clashes with other generalizations. In \cite{RAM},  $R$ is called {\it nil-semicommutative}  if any elements $u$ and $v$ in $Nil(R)$ that satisfy $uv = 0$ also satisfy $uRv = 0$. We denote these rings by {\it nil-semicommutative-I}. In \cite{TNA},  $R$ is called {\it central semicommutative} ring if the condition   $uRv \subseteq Z(R)$ holds whenever  $u,v \in R$ satisfy $uv=0$. In \cite{XX},  $R$ is called {\it J-semicommutative} ring if the condition   $uRv \subseteq rad(R)$ holds whenever  $u,v \in R$ satisfy $uv=0$.
In \cite{IN}, I.N. Herstein introduced the notion of the hypercenter of a ring. For a ring $R$, an element $a \in R$ is called hypercentral if $ax^{n}=x^{n}a$ for every $x \in R$ and for some $n=n(x, a) \in \mathbb{N}$. Thus, the hypercenter of ring $R$ is defined as:
\begin{center}
$T(R)=\{a \in R \,|\, ax^{n}=x^{n}a \, \forall \, x\in R,  \, n=n(x, a)\geq 1\}$
\end{center}
From the definition, it is easy to see that the hypercenter forms a subring of $R$ and that $Z(R) \subseteq T(R)$. For a ring $R$, we note that $T(R)$ does not necessarily equal $Z(R)$. For instance, if $R$ is a noncommutative nil ring, then $T(R) = R$ but $Z(R) \neq R$. Thus, we can see the hypercenter as a generalization of the center of a ring. However, I.N. Herstein in \cite{IN} showed that if $R$ is either semiprime or has no non-zero nil ideals, then the hypocenter and the centre of $R$ coincide. Given these generalizations of $Z(R)$, a natural question arises: what happens if we replace $Z(R)$ with $T(R)$ in the class of generalized semicommutative rings?

The purpose of this article is to introduce $\mathscr{H}$-{\it semicommutative} rings as a generalization of {\it semicommutative} rings and investigate their relations with other classes of rings. We say a ring $R$ is $\mathscr{H}$-{\it semicommutative} if whenever $ab=0$ for some $a, b \in R$, then $arb$ is hypercentral for each $r \in R$. We have proven that the class of $\mathscr{H}$-Semicommutative rings lies strictly between Zero-Insertive rings (ZI) and Abelian rings. Additionally, we have shown that if $R$ is $\mathscr{H}$-semicommutative, then for any $n \in \mathbb{N}$, the matrix subring $S_{n}^{'}(R)$ is also $\mathscr{H}$-semicommutative. Among other important results, we have demonstrated that if $R$ is $\mathscr{H}$-semicommutative and left $SF$, then $R$ is strongly regular. We show that $\mathscr{H}$-semicommutative rings are 2-primal and provide sufficient conditions for a ring $R$ to be nil-singular. Furthermore, we prove that if every simple singular module over $R$ is wnil-injective and $R$ is $\mathscr{H}$-semicommutative, then $R$ is reduced. Additionally, we study the relationship of $\mathscr{H}$-semicommutative rings with the classes of Baer, Quasi-Baer, p.p. rings, and p.q. rings and present more relevant results.

\section{$\mathscr{H}$-semicommutative rings} 
Here, we introduce $\mathscr{H}$-{\it semicommutative} rings, which contain the class of {\it semicommutative} rings. On the other hand, every reduced ring is Armendariz and {\it semicommutative}. We demonstrate that there exists a large class of $\mathscr{H}$-{\it semicommutative} rings that are not reduced. Thus, $\mathscr{H}$-{\it semicommutative} rings constitute an independent category within the class of {\it semicommutative} rings.

\begin{definition}\label{d1}
A ring $R$ is called $\mathscr{H}$-{\it semicommutative} if $uv=0$ implies that $uRv \subseteq T(R)$ for any $u, v \in R$.
\end{definition}
From the above definition, we can easily say that the class of $\mathscr{H}$-semicommutative rings is closed under subrings. Furthermore, all reduced and central semicommutative rings are $\mathscr{H}$-semicommutative rings. Next, we have noted a result that generates a large class of rings which are $\mathscr{H}$-semicommutative but neither reduced nor central-semicommutative.
\begin{proposition}\label{p3}
Every nil rings are $\mathscr{H}$-{\it semicommutative}.
\end{proposition}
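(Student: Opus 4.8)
The plan is to reduce the entire claim to the single observation that in a nil ring the hypercenter exhausts the whole ring, i.e.\ $T(R) = R$. Once this is established, $\mathscr{H}$-semicommutativity becomes vacuous: for any $u, v \in R$ with $uv = 0$ and any $r \in R$, the product $urv$ lies in $R = T(R)$, so the containment $uRv \subseteq T(R)$ holds automatically, and Definition \ref{d1} is satisfied.

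To prove $T(R) = R$, I would fix an arbitrary $a \in R$ and verify the defining condition of the hypercenter against every $x \in R$. Since $R$ is nil, each $x$ is nilpotent, so there is an exponent $m = m(x) \geq 1$ with $x^{m} = 0$. Setting $n = n(x,a) := m(x)$, which the definition permits since $n$ may depend on both $x$ and $a$, I obtain $ax^{n} = a \cdot 0 = 0 = 0 \cdot a = x^{n}a$. Hence $a$ satisfies $ax^{n} = x^{n}a$ for every $x$ with a suitable exponent, so $a \in T(R)$; as $a$ was arbitrary, $T(R) = R$. This matches the remark made earlier in the introduction that a noncommutative nil ring has $T(R) = R$.

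The argument carries no substantial obstacle; the crux is simply noticing that the freedom to choose $n$ depending on $x$ lets us take $n$ large enough to annihilate $x$, making both sides of the hypercentrality relation vanish simultaneously. The only point worth flagging is the bookkeeping about quantifiers: one must confirm that the definition of $T(R)$ allows the exponent $n$ to vary with $x$ (it does, being written $n = n(x,a)$), rather than demanding a single uniform exponent valid for all $x$. With $T(R) = R$ in hand, the proposition follows immediately.
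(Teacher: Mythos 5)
Your proposal is correct and follows exactly the paper's route: the paper's proof is the one-line observation that $T(R)=R$ for a nil ring, which you justify in detail by taking $n=n(x,a)$ to be the nilpotency index of $x$ so that both sides of $ax^{n}=x^{n}a$ vanish. Your quantifier remark (that $n$ may depend on $x$) is the right point to check, and your argument fills in the detail the paper leaves implicit.
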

\begin{proof}
The proof follows easily since $T(R)=R$.
\end{proof}
\begin{proposition}\label{p2.3}
Let $R$ be $\mathscr{H}$-{\it semicommutative}. Then $R$ is central semicommutative if any of the following conditions is true.
\begin{itemize}
\item[(1)] $R$ is division ring.
\item[(2)] $R$ is semisimple ring.
\item[(3)] $R$ be a ring with no nonzero nil ideal.
\end{itemize}
\end{proposition}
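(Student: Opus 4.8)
The plan is to reduce all three cases to a single observation: under each of the hypotheses the hypercenter collapses onto the center, that is, $T(R)=Z(R)$. Once this is in hand, central semicommutativity is immediate: whenever $u,v\in R$ satisfy $uv=0$, the $\mathscr{H}$-semicommutativity of $R$ gives $uRv\subseteq T(R)=Z(R)$, which is exactly the defining property of a central semicommutative ring. So the entire proof amounts to verifying $T(R)=Z(R)$ in each of the three situations.

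For this I would invoke the theorem of Herstein recalled in the introduction: if $R$ is semiprime, or more generally if $R$ has no nonzero nil ideal, then $T(R)=Z(R)$. It therefore suffices to check that each of (1), (2), (3) forces $R$ to have no nonzero nil ideal. Case (3) is immediate, being precisely the stated hypothesis. For case (1), a division ring has only the ideals $(0)$ and $R$, and $R$ is not nil since it contains $1$; hence its only nil ideal is $(0)$, and Herstein's theorem applies. For case (2), I would use that a semisimple ring has $rad(R)=(0)$ together with the standard fact that every nil one-sided ideal of any ring is contained in the Jacobson radical; consequently a semisimple ring admits no nonzero nil ideal, and again $T(R)=Z(R)$.

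Combining these, in each of the three cases we obtain $T(R)=Z(R)$, whence $uRv\subseteq Z(R)$ as above and $R$ is central semicommutative. I do not expect a genuine obstacle: the real content is simply that each of the three hypotheses is strong enough to trigger the Herstein coincidence $T(R)=Z(R)$, and the only point needing a little care is the elementary fact that nil one-sided ideals lie in $rad(R)$, which is what eliminates nil ideals in the semisimple case. An alternative, uniform formulation would observe that (1) and (2) both render $R$ semiprime---a division ring is reduced, and a semisimple ring has zero prime radical---so that all three cases fall directly under Herstein's ``semiprime'' hypothesis.
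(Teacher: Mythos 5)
Your proposal is correct and is exactly the argument the paper intends: the paper's proof is a one-line citation of Herstein's theorem that $T(R)=Z(R)$ when $R$ is semiprime or has no nonzero nil ideals, and you have simply (and correctly) supplied the routine verifications that each of the three hypotheses puts $R$ in that situation. No substantive difference in approach.
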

\begin{proof}
The proof is follows easily from \cite{IN}.
\end{proof}
\begin{proposition}\label{p1}
Every $\mathscr{H}$-{\it semicommutative} ring is nil-semicommutative-II.
\end{proposition}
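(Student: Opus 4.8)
The plan is to reduce the statement to a nilpotency assertion and then exploit the definition of the hypercenter together with Herstein's theorem from \cite{IN}. Recall that $R$ is nil-semicommutative-II if $uv\in Nil(R)$ forces $uRv\subseteq Nil(R)$, so suppose $u,v\in R$ satisfy $(uv)^{k}=0$ for some $k\ge 1$, fix $r\in R$, and aim to show that $urv$ is nilpotent. The first step is to feed the nilpotency of $uv$ into Definition \ref{d1}: since $(uv)^{k-1}u\cdot v=(uv)^{k}=0$ and $u\cdot (vu)^{k-1}v=(uv)^{k}=0$, $\mathscr{H}$-semicommutativity yields $(uv)^{k-1}urv\in T(R)$ and $ur(vu)^{k-1}v\in T(R)$ for every $r$. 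These are the two relations I would carry through the argument.

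The second step is to linearize the hypercenter. Passing to $\bar R:=R/Nil^{\star}(R)$, the quotient has no nonzero nil ideals, so by Herstein's hypercenter theorem \cite{IN} its hypercenter coincides with its center, $T(\bar R)=Z(\bar R)$. Because a ring homomorphism carries hypercentral elements to hypercentral elements, $\overline{T(R)}\subseteq T(\bar R)=Z(\bar R)$; hence the two elements produced above become central in $\bar R$. Since $\bar R$ is semiprime it is a subdirect product of prime rings $\bar R/Q_{\alpha}$, and the key ring-theoretic fact I would use in each factor is that a nonzero central element of a prime ring is a non-zero-divisor.

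The core computation is the case $uv=0$, which I would settle first (this is precisely the assertion that $\mathscr{H}$-semicommutative rings are weakly semicommutative). Here Definition \ref{d1} gives $uRv\subseteq T(R)$ outright, so in each prime factor the images $p,q$ of $u,v$ satisfy $pq=0$ and $psq\in Z(\bar R/Q_{\alpha})$ for all $s$. Writing $c=psq$ and using that $c$ is central, the identities $qc=cq$ and $cp=pc$ together with $pq=0$ force $p^{2}sq^{2}=0$ and then $cp=0$ (or $cq=0$); regularity of a nonzero central element in the prime ring $\bar R/Q_{\alpha}$ then forces $c=0$. Thus $psq=0$ in every prime factor, so $urv\in\bigcap_{\alpha}Q_{\alpha}$, that is, $urv\in Nil^{\star}(R)\subseteq Nil(R)$.

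The remaining, and main, difficulty is the passage from $uv=0$ to a general nilpotent $uv$. The natural move is to apply the case just proved to the factorizations $u\cdot(vu)^{k-1}v=0$ and $(uv)^{k-1}u\cdot v=0$, stripping one copy of $uv$ at a time, and then to induct on the nilpotency index $k$. The obstacle is that for $k\ge 2$ the surviving relations in the prime factors carry interposed powers of $uv$ and no longer pin $\overline{urv}$ down on the nose; consequently one cannot expect $urv\in Nil^{\star}(R)$ but only the weaker $urv\in Nil(R)$, so the induction must instead be arranged to show directly that a suitable power of $urv$ lands in $Nil^{\star}(R)$. Keeping the hypercentral relations alive while peeling the factors of $uv$ is where I expect the real work of the proof to lie.
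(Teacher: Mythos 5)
Your proposal does not close: you prove only the case $uv=0$ and explicitly leave the passage to a general nilpotent $uv$ unresolved, and that passage is the actual content of the proposition, so there is a genuine gap. The missing idea is a reduction you never make: by Lemma 2.2 of \cite{LJ}, $R$ is nil-semicommutative-II as soon as $ru\in Nil(R)$ for every $u\in Nil(R)$ and every $r\in R$ (since $arb$ is nilpotent iff $(ba)r$ is, and $ba$ is nilpotent whenever $ab$ is). This replaces the awkward hypothesis ``$uv$ nilpotent'' by the tractable one ``$u^{n}=0$'', and from there the paper argues by a direct iteration that never leaves $R$: from $u^{n-1}\cdot u=0$ one gets $u^{n-1}ru\in T(R)$, and evaluating the hypercentral identity at $x=ru$ gives $u^{n-1}(ru)^{m+1}=(ru)^{m}(u^{n-1}ru)=(ru)^{m-1}r\,u^{n}\,ru=0$; rewriting this as $u^{n-2}\cdot\bigl(u(ru)^{m+1}\bigr)=0$ and repeating trades one power of $u$ for finitely many powers of $ru$ at each step, terminating in $(ru)^{t}=0$. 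The trick you are missing is precisely this specialization of the hypercentral condition at the very element $ru$ whose nilpotency is being established.

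For what it is worth, the part you do prove is correct in essence and proceeds by a genuinely different route from anything in the paper: passing to $R/Nil^{\star}(R)$, invoking Herstein's theorem from \cite{IN} to convert hypercentral into central, and killing $psq$ in each prime factor. (Your step ``$p^{2}sq^{2}=0$ forces $cp=0$ or $cq=0$'' needs the observation that $p^{2}Rq^{2}=0$, hence $p^{2}=0$ or $q^{2}=0$ by primeness, hence $c^{2}=p^{2}(sq)^{2}=0$ or $c^{2}=(ps)^{2}q^{2}=0$, and a nonzero central element of a prime ring cannot be nilpotent.) But this machinery does not propagate to $(uv)^{k}=0$ with $k\ge 2$: $\mathscr{H}$-semicommutativity of $R$ says nothing about products that are merely nilpotent, and the quotient $R/Nil^{\star}(R)$ need not inherit $\mathscr{H}$-semicommutativity (the paper itself exhibits quotients that fail it), so your own closing paragraph correctly identifies the unresolved core rather than a routine remaining verification.
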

\begin{proof}
\justify
Let $u \in N(R)$ such that $u^{n}=0$. Thus by Lemma 2.2 in \cite{LJ}, it is enough to show that $ru \in N(R)$ for all $r \in R$. We have $u^{n}=u^{n-1}.u=0 \Rightarrow u^{n-1}ru \in T(R)$ for all $r \in R$. This implies $(u^{n-1}ru)(ru)^{m}=(ru)^{m}(u^{n-1}ru)=(ru)^{m-1}ru(u^{n-1}ru)=0$.  Thus we get $u^{n-1}(ru)^{m+1}=0$. Hence $u^{n-2}ru(ru)^{m+1} \in T(R)$. This implies $u^{n-2}ru(ru)^{m+1}(ru)^{p}=(ru)^{p}(u^{n-2}ru(ru)^{m+1})=(ru)^{p-1}.r(u^{n-1}ru(ru)^{m+1})=0$. Thus we get $u^{n-2}ru(ru)^{m+1}(ru)^{p}=u^{n-3}u(ru)^{m+p+2}=0$. Hence $u^{n-3}ru(ru)^{m+p+2} \in T(R)$.  Proceeding in similar way we get $u^{n-3}(ru)(ru)^{m+p+2}(ru)^{q}=(ru)^{q}\{u^{n-3}ru(ru)^{m+p+2}\}=(ru)^{q-1}r\{u^{n-2}ru(ru)^{m+1}(ru)^{p}\}=0$. Hence we get $u^{n-3}(ru)^{m+p+q+3}=0$. Proceeding similarly we get $(ru)^{t}=0$ for some $t\in \mathbb{N}$. This implies $ru \in N(R)$. 
\end{proof}

\begin{proposition}
If a ring $R$ is $\mathscr{H}$-semicommutative, then $R$ is abelian and for any idempotent $e=e^{2} \in R$ the rings $eR$ and $(1-e)R$ are $\mathscr{H}$-semicommutative.
\end{proposition}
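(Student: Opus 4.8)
The plan is to split the statement into two assertions: first that every idempotent of $R$ is central (so $R$ is abelian), and second that the corner rings $eR$ and $(1-e)R$ inherit $\mathscr{H}$-semicommutativity. The first assertion is the crux, and once it is established the second follows from the good behaviour of the hypercenter under the block decomposition induced by a central idempotent.

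For the abelian part, I would fix an idempotent $e=e^{2}$ and an arbitrary $r \in R$. Since $e(1-e)=0$ and $(1-e)e=0$, the hypothesis yields $a:=er(1-e) \in T(R)$ and $b:=(1-e)re \in T(R)$. A direct computation using $e^{2}=e$ shows $ea=a$ and $ae=0$, and likewise $be=b$ and $eb=0$. The key step is to feed $x=e$ into the defining property of the hypercenter: because $a \in T(R)$, there is some $n\geq 1$ with $ae^{n}=e^{n}a$, and $e^{n}=e$ collapses this to $ae=ea$, i.e. $0=a$. The same argument gives $b=0$. Hence $er(1-e)=0$ and $(1-e)re=0$, which rearrange to $er=ere=re$; thus $e$ is central and $R$ is abelian.

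For the second part, I would use that $e$ is now central, so $R \cong eR \times (1-e)R$ as rings, with $eR=eRe$ a ring with identity $e$ (and similarly for $1-e$). The crucial observation is that the hypercenter splits as $T(R_{1}\times R_{2})=T(R_{1})\times T(R_{2})$; the nontrivial direction needs the remark that if $a$ commutes with $x^{m}$ then it commutes with every power $x^{km}$, so a single exponent $n$ can be chosen that works in both factors simultaneously. Granting this, $\mathscr{H}$-semicommutativity passes to each direct factor: if $u_{1}v_{1}=0$ in $eR$, then $(u_{1},0)(v_{1},0)=0$ in $R$, so $(u_{1}r_{1}v_{1},0) \in T(R)=T(eR)\times T((1-e)R)$ for every $r_{1}$, forcing $u_{1}r_{1}v_{1} \in T(eR)$. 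The same reasoning handles $(1-e)R$.

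The main obstacle is conceptual rather than computational: $\mathscr{H}$-semicommutativity only yields membership in $T(R)$ rather than the vanishing one gets from ordinary semicommutativity, so the usual one-line proof of abelianness does not apply. The resolution is the realization that the hypercenter condition, tested against the idempotent $e$ itself, upgrades \emph{lies in} $T(R)$ to \emph{equals} $0$, since $e^{n}=e$ removes the ambiguity in the exponent. A secondary point to handle carefully is that $eR$ carries the identity $e\neq 1$, so one cannot blindly invoke closure under unital subrings; the direct-product description of $T(R)$ (equivalently, a direct check that the commuting relations restrict to the corner) is what makes the transfer legitimate.
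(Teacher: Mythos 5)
Your proof is correct, and its key step---the abelian part---follows essentially the same route as the paper: both evaluate the hypercentrality of $er(1-e)$ (resp.\ $(1-e)re$) at an idempotent ($e$ in your case, $1-e$ in the paper's), using idempotency to collapse the exponent $x^{n}$ and force the element to vanish, which upgrades membership in $T(R)$ to equality with $0$. For the corner rings the paper simply invokes closure of $\mathscr{H}$-semicommutativity under subrings, whereas you route through $R\cong eR\times(1-e)R$ and the splitting $T(R_{1}\times R_{2})=T(R_{1})\times T(R_{2})$ (with the common-exponent remark); both are valid, and your extra caution about $eR$ having identity $e\neq 1$ is not strictly necessary, since for a subring $S$ one only needs $uv=0$ in $S$ to give $uSv\subseteq T(R)\cap S\subseteq T(S)$, the hypercentral condition being tested only against elements of $S$.
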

\begin{proof}
Since $eR$ and   $(1-e)R$ are subring of $R$ recall that $\mathscr{H}$-{\it semicommutative} rings are closed under subring. Thus now it is left to  show that $R$ is abelian. We have  $e(1-e)=0$ which implies $er(1-e) \in T(R)$.  This implies   for some $n \in \mathbb{N}$, we have $er(1-e)(1-e)^{n}=(1-e)^{n}er(1-e)=(1-e)er(1-e)=0$. Thus we have $er(1-e)^{n+1}=er(1-e)=0\Rightarrow er=ere$. Again  $(1-e)e=0$ implies $(1-e)re \in T(R)$ for all $r \in R$. Processing similarly we have for some $n \mathbb{N}$,  $(1-e)^{n}(1-e)re=(1-e)re(1-e)^{n}=0$. This implies $(1-e)^{n+1}re=(1-e)re=0 \Rightarrow re=ere$. Thus we get $er=re$. Hence $R$ is abelian.
\end{proof}
\begin{proposition}\label{p2}
If a ring $R$ is abelian and for some idempotent $e=e^{2} \in R$ the rings $eR$ and $(1-e)R$ are $\mathscr{H}$-semicommutative, then $R$ is $\mathscr{H}$-semicommutative.
\end{proposition}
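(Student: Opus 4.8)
The plan is to use abelianness to convert the hypothesis into a statement about a ring direct product, and then to check that the hypercenter respects that product. Since $R$ is abelian the idempotent $e$ is central, so both $eR$ and $(1-e)R$ are two-sided ideals, $e+(1-e)=1$, and $R\cong eR\times(1-e)R$ as rings, where $eR$ and $(1-e)R$ are regarded as rings with identities $e$ and $1-e$. The whole argument then rests on the identity $T(R)=T(eR)\oplus T((1-e)R)$, after which the $\mathscr{H}$-semicommutativity of the two summands can be pasted together.

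The main step, which I expect to be the only genuine obstacle, is proving $T(R)=T(eR)\oplus T((1-e)R)$, because hypercentrality lets the commuting exponent $n=n(x,a)$ depend on $x$, and the two summands may supply different exponents. Concretely, to show $a+b\in T(R)$ for $a\in T(eR)$ and $b\in T((1-e)R)$, I would fix $x\in R$ and write $x=x_{1}+x_{2}$ with $x_{1}=ex\in eR$ and $x_{2}=(1-e)x\in(1-e)R$. Centrality of $e$ gives $x_{1}x_{2}=x_{2}x_{1}=0$, hence $x^{m}=x_{1}^{m}+x_{2}^{m}$ for every $m\ge 1$, and also $ax_{2}^{m}=x_{2}^{m}a=bx_{1}^{m}=x_{1}^{m}b=0$. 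Picking $n_{1}$ with $ax_{1}^{n_{1}}=x_{1}^{n_{1}}a$ and $n_{2}$ with $bx_{2}^{n_{2}}=x_{2}^{n_{2}}b$, the observation that commuting with $y^{k}$ forces commuting with $y^{tk}$ lets me pass to the common exponent $m=n_{1}n_{2}$, so that $(a+b)x^{m}=ax_{1}^{m}+bx_{2}^{m}=x_{1}^{m}a+x_{2}^{m}b=x^{m}(a+b)$. This yields $a+b\in T(R)$; the reverse inclusion $T(R)\subseteq T(eR)\oplus T((1-e)R)$ is routine, obtained by evaluating hypercentrality on elements supported in a single summand.

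With this splitting in hand the proposition follows quickly. Suppose $uv=0$ and fix $r\in R$. Multiplying by the central idempotents gives $(eu)(ev)=e\,uv=0$ and $((1-e)u)((1-e)v)=(1-e)uv=0$, so $eu,ev$ is a zero-product in $eR$ and $(1-e)u,(1-e)v$ is a zero-product in $(1-e)R$. Applying $\mathscr{H}$-semicommutativity of each summand to the middle factor $er$, respectively $(1-e)r$, gives $e\,urv=(eu)(er)(ev)\in T(eR)$ and $(1-e)urv=((1-e)u)((1-e)r)((1-e)v)\in T((1-e)R)$. Since $urv=e\,urv+(1-e)urv$, the splitting of the hypercenter forces $urv\in T(eR)\oplus T((1-e)R)=T(R)$. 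As $r\in R$ was arbitrary, $uRv\subseteq T(R)$, and therefore $R$ is $\mathscr{H}$-semicommutative.
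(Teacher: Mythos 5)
Your argument is correct and follows essentially the same route as the paper: decompose $urv = e\,urv + (1-e)\,urv$ via the central idempotent, apply $\mathscr{H}$-semicommutativity in each corner ring, and recombine. The only difference is cosmetic --- the paper verifies directly that $e\,urv \in T(R)$ using $(es)^{n}=es^{n}$ and then adds the two pieces inside the subring $T(R)$, whereas you package the same exponent bookkeeping (passing to the common exponent $n_{1}n_{2}$) into the inclusion $T(eR)\oplus T((1-e)R)\subseteq T(R)$.
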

\begin{proof}
 let us assume that $ab=0$ for some $a, b \in R$. This implies $eab=0\Rightarrow eaeb=0$. This implies $(ea)er(eb) \in T(eR)$ for all $r \in R$. Thus $eaereb(es)^{n}=(es)^{n}eaereb \Rightarrow earbs^{n}=s^{n}earb$ for all $s \in R$. Thus we have  $ earb \in T(R)$.  Again  by applying similar procedure we  get $(1-e)arb \in T(R)$. Thus $earb+(1-e)arb= arb \in T(R)$.
\end{proof}
Upon examining Proposition \ref{p2}, it may seem that the requirement for $eR$ and $(1-e)R$ to be $\mathscr{H}$-semicommutative is superfluous. However, the example provided below will demonstrate that this is not the case.
 \begin{example}\label{e1}
 There exist a ring $R$ which is abelian but not $\mathscr{H}$-{\it semicommutative}.
 \end{example}
\begin{proof}
Let us consider a ring $R$ as follows:
\[R=\Bigg\{\begin{pmatrix}
u & v \\ 
w & x
\end{pmatrix}\,\,|\,\, u, v,w,x \in \mathbb{Z}, u\equiv x(mod 2), w \equiv v(mod 2) \Bigg\}\]. Then the only idempotent elements are $\begin{pmatrix}
1 & 0 \\ 
0 & 1
\end{pmatrix} $ and $\begin{pmatrix}
0 & 0 \\ 
0 & 0
\end{pmatrix}$. Hence it is abelian. Now let $P=\begin{pmatrix}
2 & 2 \\ 
0 & 0
\end{pmatrix}$ and $Q=\begin{pmatrix}
0 & 2 \\ 
0 & -2
\end{pmatrix}$. Thus $PQ=0$, let us take any arbitrary $C=\begin{pmatrix}
3 & 4 \\ 
0 & 1
\end{pmatrix}$ in $R$, then $PCQ=\begin{pmatrix}
0 & -8 \\ 
0 & 0
\end{pmatrix}$. We can see that $PCQ \notin T(R)$ as for $K=\begin{pmatrix}
4 & 0 \\ 
0 & 0
\end{pmatrix} \in R$ $PCQK^{n}=0$ but $K^{n}PCQ=\begin{pmatrix}
0 & -8.4^{n} \\ 
0 & 0
\end{pmatrix}$. Hence $PCQK^{n} \neq K^{n}PCQ$.
\end{proof}
Next, we investigate some triangular matrix subrings, which provide a good supply of rings that are $\mathscr{H}$-{\it semicommutative} but not {\it semicommutative}, as well as rings that are {\it nil-semicommutative} but not $\mathscr{H}$-{\it semicommutative}. For a ring $R$, let $T_{n}(R)$ denote the set of all $n \times n$ upper triangular matrices, and let $V_{n}(R)=\sum_{i=1}^{n-1}E_{i,i+1}$. For $T_{n}(R)$, consider the following subsets: 
\begin{center}
\[A_{n}(R)=\sum_{i=1}^{n}N(R)E_{ii}+\sum_{l=2}^{\floor{\frac{n}{2}}}RV_{n}^{l-1}+\sum_{i=1}^{\floor{\frac{n+1}{2}}}\sum_{j=\floor{\frac{n}{2}}+i}^{n}RE_{ij}\]\\
\[B_{n}(R)=\sum_{i=1}^{n}N(R)E_{ii}+\sum_{l=3}^{\floor{\frac{n}{2}}}RV_{n}^{l-2}+\sum_{i=1}^{\floor{\frac{n+1}{2}}+1}\sum_{j=\floor{\frac{n}{2}}+i-1}^{n}RE_{ij}\]\\
\[U_{n}(R)=\sum_{i=1}^{n}N(R)E_{ii}+\sum_{i=1}^{\floor{\frac{n-1}{2}}}\sum_{j=\floor{\frac{n}{2}}+1}RE_{ij}+\sum_{j=\floor{\frac{n-1}{2}}+2}^{n}RE_{\floor{\frac{n-1}{2}}+1,j}\]\\
\[S_{n}^{'}(R)=\sum_{i=1}^{n}N(R)E_{ii}+\sum_{i < j}RE_{ij}\]\\
\[S_{n}(R)=RI_{n}+\sum_{i < j}RE_{ij}\]\\
\[T(R,n)=RI_{n}+\sum_{l=2}^{n}RV_{n}^{l-1}\]
\[T^{'}(R,n)=\sum_{i=1}^{n}N(R)E_{ii}+\sum_{l=2}^{n}RV_{n}^{l-1}\]\\
\end{center}
\begin{proposition}
For a ring $R$,  the following statements are true.
\begin{itemize}
\item[(1)] If  $R$ is reduced, then $T(R, n), A_{n}(R), \, \, U_{n}(R)$ is central semicommutative.
\item[(2)] If  $R$  is nil simicommutative-II, then $T_{n}(R)$ is nil-semicommutative-II.
\item[(3)] For any ring $R$, $T_{n}(R)$ is not $\mathscr{H}$-semicommutative.
\item[(4)] If $R$ is $\mathscr{H}$-semicommutative, then  $S_{n}^{'}(R),\,\,     \,\, U_{n}(R), \, \, A_{n}(R),\,\, T_{n}^{'}(R,n)$ is $\mathscr{H}$-{\it semicommutative} but not central semicommutative.

\item[(5)]  If $R$ is $\mathscr{H}$-semicommutative, then $B_{n}(R)$ is $\mathscr{H}$-{\it semicommutative} for all $n \in \mathbb{N}$ but not central semicommutative for $n=2,3$.
\end{itemize}
\end{proposition}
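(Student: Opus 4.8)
The plan is to separate a common structural observation from the part-specific verifications. The unifying remark driving (4) and (5) is that each of the subrings $S_n^{'}(R),\, U_n(R),\, A_n(R),\, T^{'}(R,n)$ and $B_n(R)$ consists of upper triangular matrices whose diagonal entries lie in $N(R)$ and whose remaining nonzero entries sit strictly above the diagonal. Since $R$ is $\mathscr{H}$-semicommutative it is $2$-primal (a fact established in this paper), so $N(R)=Nil(R)$ is an ideal; this is exactly what guarantees that these sets are closed under addition and multiplication and hence are genuine (non-unital) subrings. A short degree estimate then shows that any single such matrix $M$ is nilpotent: each summand of the $(i,j)$ entry of $M^{k}$ factors through a consecutive block $d_p^{\,c_p-1}$, where $d_p$ is a diagonal entry of $M$ and $\sum_p c_p=k+1$, so for $k$ large some block exceeds the nilpotency index of its $d_p$ and the term vanishes. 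Thus each of these subrings is a nil ring, and by Proposition~\ref{p3} every nil ring is $\mathscr{H}$-semicommutative. This settles the $\mathscr{H}$-semicommutativity assertions of both (4) and (5) uniformly and for all $n$.

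For (2) I would use that $Nil(T_n(R))$ is precisely the set of upper triangular matrices with every diagonal entry in $Nil(R)$. Given $U,V\in T_n(R)$ with $UV\in Nil(T_n(R))$, the diagonal of $UV$ is $(u_{ii}v_{ii})_i$, so each $u_{ii}v_{ii}\in Nil(R)$; applying nil-semicommutativity-II of $R$ entrywise yields $u_{ii}w_{ii}v_{ii}\in Nil(R)$ for every $W$, whence $UWV$ again has nilpotent diagonal and lies in $Nil(T_n(R))$. For (3) a single explicit witness works for every $n\ge 2$: with $a=E_{11}$ and $b=E_{22}$ we have $ab=0$, yet $aE_{12}b=E_{12}$, and $E_{12}\notin T(T_n(R))$ because for $x=E_{11}$ one has $x^{m}=E_{11}$ for all $m$, so $E_{12}x^{m}=0\neq E_{12}=x^{m}E_{12}$; no power equation can hold, so $T_n(R)$ fails to be $\mathscr{H}$-semicommutative.

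For (1) the hypothesis is stronger: $R$ reduced forces $N(R)=0$, so $A_n(R)$ and $U_n(R)$ reduce to strictly-upper-triangular patterned rings, while $T(R,n)\cong R[x]/(x^{n})$ under $V_n\mapsto x$. Here I would exploit the Armendariz-type behaviour of reduced rings: writing the relation $uv=0$ coefficientwise and using that in a reduced ring $ab=0$ implies both $ba=0$ and $aRb=0$, one shows that for any $w$ the entries of $uwv$ lie in $Z(R)$, so $uwv$ lands in the center $Z(R)I_n+\cdots$ of the corresponding triangular ring. This is the most computational part and must be organized around the reduced identities rather than brute force.

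Finally, the assertions in (4) and (5) that these rings are not central semicommutative are existence statements, established by a counterexample over a suitable non-reduced (for instance nil) $\mathscr{H}$-semicommutative base ring: one exhibits $u,v$ with $uv=0$ and some $uwv\notin Z$ of the ambient triangular ring, after first pinning down that center. I expect the genuine obstacles to be (i) verifying closure under multiplication of the intricately indexed families $A_n(R)$, $B_n(R)$, and $U_n(R)$, and (ii) the special small-dimensional analysis for $B_n(R)$, where the restriction to $n=2,3$ in the failure of central semicommutativity reflects the degenerate off-diagonal pattern in those dimensions and has to be checked by direct computation of $Z(B_n(R))$.
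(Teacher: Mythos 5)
Your treatment of the heart of the proposition, parts (4) and (5), is exactly the paper's: since $R$ is $\mathscr{H}$-semicommutative, $Nil(R)$ is an ideal, so $S_{n}^{'}(R)$, $A_{n}(R)$, $U_{n}(R)$, $T^{'}(R,n)$ and $B_{n}(R)$ are nil subrings, and Proposition~\ref{p3} finishes the $\mathscr{H}$-semicommutativity claims. (The paper additionally shortcuts by observing that $A_{n}(R)$, $U_{n}(R)$, $T^{'}(R,n)$ sit inside $S_{n}^{'}(R)$ and that $\mathscr{H}$-semicommutativity passes to subrings.) Where you genuinely diverge is in (2) and (3). For (3) the paper argues by contradiction through abelianness: if $T_{n}(R)$ were $\mathscr{H}$-semicommutative it would be abelian, forcing $E_{22}$ central, against $E_{22}E_{12}=0\neq E_{12}=E_{12}E_{22}$. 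Your direct witness $E_{11}E_{22}=0$ with $E_{11}E_{12}E_{22}=E_{12}\notin T(T_{n}(R))$ (tested against $x=E_{11}$, for which $x^{m}E_{12}=E_{12}$ but $E_{12}x^{m}=0$) is equally valid and arguably cleaner, since it needs no prior proposition. For (2) the paper simply cites Corollary 2.12 of \cite{YJ}, whereas you give a self-contained argument via the identification $Nil(T_{n}(R))=\{M : M_{ii}\in Nil(R)\}$; your degree estimate showing that an upper triangular matrix with nilpotent diagonal entries is itself nilpotent holds over an arbitrary base ring, so the argument is sound.

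Two places where your write-up is thinner than the paper's. First, for (1) the paper again only cites Lemma 2.9 of \cite{TNA} and Theorem 2.1 of \cite{YG}; your Armendariz-flavoured sketch is plausible but not carried out, and pinning down the centers of $T(R,n)$, $A_{n}(R)$, $U_{n}(R)$ is where the real work lies. Second, the paper does exhibit the explicit non-central-semicommutative witness in $U_{2}(R)$ (matrices built from $a,c\in Nil(R)$ with $a^{p}=c^{q}=0$), which you only promise to produce. You are right, however, that any such witness forces $Nil(R)\neq 0$ --- for reduced $R$, part (1) makes several of these rings central semicommutative --- and your explicit acknowledgement that the counterexample must live over a non-reduced base is more careful than the paper, which silently assumes nonzero nilpotents exist.
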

\begin{proof}
\begin{itemize}
\item[(1)] The proof follows from Lemma 2.9 in \cite{TNA} and Theorem 2.1 in \cite{YG}.
\item[(2)] The  proof follows from Corollary 2.12 in \cite{YJ}.
\item[(3)] If possible, assume that for $n \geq 2$, $T_{n}(R)$ is $\mathscr{H}$-{\it semicommutative}. Then by \ref{p2}, $T_{n}(R)$ is an abelian ring. Consider $E_{22}$, which is clearly idempotent. Since $T_{n}(R)$ is abelian, this implies $E_{22} \in Z(T_{n}(R))$. However, we see that $E_{22}E_{12} = 0_{n \times n} \neq E_{12} = E_{12}E_{22}$. Hence, we get a contradiction, implying that $T_{n}(R)$ is not $\mathscr{H}$-{\it semicommutative}. 
\item[(4)]  Since $A_{n}(R), \,\,U_{n}(R), \,\,T^{'}(R, n)$ are subsets of $S^{'}_{n}(R)$. Thus it is sufficient to show that $S^{'}_{n}(R)$ is $\mathscr{H}$-{\it semicommutative}. Again since $R$ is {\it $\mathscr{H}$-semicommutative}, implies $nil(R)$ is an ideal of $R$ which makes $S^{'}_{n}(R)$ as a nil ring. Thus by Proposition \ref{p3},  $S^{'}_{n}(R)$ is  $\mathscr{H}$-{\it semicommutative}. Now we proceed to show that they are not central semicommutative ring. For $U_{n}(R)$, Let $a, c \in Nil(R)$ such that $a^{p}=0$ and $c^{q}=0$ for some $1 < p,q \in \mathbb{N}$. Then consider $A=\begin{pmatrix}
a & c^{q-1} \\ 
0 & c^{q-1}
\end{pmatrix} $ and $B=\begin{pmatrix}
a^{p-1} & a^{p-1} \\ 
0 & c
\end{pmatrix}$ be any elements of $U_{2}(R)$. Then we see that  $AB=\begin{pmatrix}
a & c^{q-1} \\ 
0 & c^{q-1}
\end{pmatrix}\begin{pmatrix}
a^{p-1} & a^{p-1} \\ 
0 & c
\end{pmatrix}=\begin{pmatrix}
0 & 0 \\ 
0 & 0
\end{pmatrix}$. Now consider $C=\begin{pmatrix}
0 & 1 \\ 
0 & 0
\end{pmatrix}$. Then  we have $ACB=\begin{pmatrix}
0 & ac \\ 
0 & o
\end{pmatrix}$. On the other hand we see that $\begin{pmatrix}
o & ac \\ 
0 & 0
\end{pmatrix}\begin{pmatrix}
t & 0 \\ 
0 & 0
\end{pmatrix}=\begin{pmatrix}
0 & 0 \\ 
0 & 0
\end{pmatrix}$ but however $\begin{pmatrix}
t & 0\\ 
0 & c
\end{pmatrix}\begin{pmatrix}
0 & ac \\ 
0 & 0
\end{pmatrix}=\begin{pmatrix}
0 & tac \\ 
0 & 0
\end{pmatrix}$ may not be a zero matrix. Thus $\begin{pmatrix}
0 & ac \\ 
0 & 0
\end{pmatrix} \notin  C(U_{2}(R))$. Hence $U_{2}(R)$ is not {\it central semicommutative}. Since $U_{2}(R)$ is embedded as subring in $U_{n}(R)$ for $n \geq 2$. Thus we can say that $U_{n}(R)$ is not {\it central semicommutative}. Similarly we can show that $S_{n}^{'}(R),\,\,  A_{n}(R),\,\, T_{n}^{'}(R,n)$ are not {\it central semicommutative}.
\item[(5).] Since $R$ is {\it $\mathscr{H}$-semicommutative}, implies $nil(R)$ is an ideal of $R$ which makes $B_{n}(R)$ as a nil ring. Thus by Proposition \ref{p3},  $B_{n}(R)$ is  $\mathscr{H}$-{\it semicommutative} for all $n\in \mathbb{N}$. On the other hand $B_{2}(R)$ is not {\it central semicommutative } by similar counter example given in (4).  
\end{itemize}
\end{proof}
Recall that if $uv=1$ implies $vu=1$ for any $u, v \in R$, then $R$ is called directly finite.
\begin{proposition}
Every $\mathscr{H}$-semicommutative ring is directly finite.
\end{proposition}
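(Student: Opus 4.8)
The plan is to reduce everything to the already-established fact that every $\mathscr{H}$-semicommutative ring is abelian, and then run the standard argument that an abelian ring is directly finite. So suppose $u, v \in R$ satisfy $uv = 1$; the goal is to deduce $vu = 1$.

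First I would form the element $e = vu$ and observe that it is idempotent: indeed $e^{2} = (vu)(vu) = v(uv)u = v \cdot 1 \cdot u = vu = e$. Since $R$ is $\mathscr{H}$-semicommutative, the preceding proposition guarantees that $R$ is abelian, so every idempotent of $R$ is central; in particular $e = vu$ is central.

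Next I would exploit the centrality of $e$ together with the hypothesis $uv = 1$. A direct computation gives $ue = u(vu) = (uv)u = u$, and centrality of $e$ then yields $eu = ue = u$. Substituting this into the chain $e = e \cdot 1 = e(uv) = (eu)v = uv = 1$ forces $e = 1$, that is, $vu = 1$, which is exactly the assertion that $R$ is directly finite.

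I do not anticipate a genuine obstacle: the only nontrivial input is the abelian property, which is available from the earlier proposition, after which the argument is a short and purely formal manipulation of idempotents. The single point requiring care is to invoke only the given relation $uv = 1$ (never the desired conclusion) when verifying that $e$ is idempotent and when computing $ue = u$, so that the reasoning does not become circular.
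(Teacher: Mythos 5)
Your proof is correct, but it takes a genuinely different route from the paper. The paper argues directly from the definition: from $uv=1$ it forms $u(vu-1)=0$, concludes $ur(vu-1)\in T(R)$ for all $r$, and then, using $(vu-1)v=vuv-v=0$, extracts an identity of the form $v^{n}(vu-1)=0$ and repeatedly multiplies on the left by $u$ (using $uv=1$ to strip off one factor of $v$ at each step) until it reaches $vu-1=0$. You instead route everything through the earlier proposition that every $\mathscr{H}$-semicommutative ring is abelian, and then run the classical argument that abelian rings are directly finite: $e=vu$ is idempotent, hence central, $ue=u$ forces $eu=u$, and $e=e(uv)=(eu)v=uv=1$. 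Your computations check out and nothing is circular, since the abelian property is established in the paper before this proposition. What your approach buys is brevity and a clean isolation of the only structural input actually needed (idempotents are central); it also makes transparent that the result holds for any abelian ring. What the paper's approach buys is independence from the abelian proposition — it exercises the hypercenter condition directly — at the cost of a longer and more delicate power computation.
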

\begin{proof}
Let us consider $uv=1$ for some $u , \, v \in R$. This implies $u(vu-1)=0 \Rightarrow ur(vu-1) \in T(R)$ for all $r \in R$. This implies for some $n \in \mathbb{N}$, $v^{n}ur(vu-1)=ur(vu-1)v^{n}=vr(vuv^{n}-v^{n})=0$. Thus $v^{n}ur(vu-1)=0$. By taking $r=v$, we have $v^{n}uv(vu-1)=v^{n-1}(uv-1)=0$. Multiplying by $u$ from the left side we get $v^{n-2}(uv-1)=0$. Repeating the same left multiplication by $u$, after finite steps we get $vu=1$. 
\end{proof}
Recall that for a ring $R$, the intersection of all prime ideals is known as the prime radical, denoted by $P(R)$. Obviously, $P(R) \subseteq Nil(R)$. $R$ is called 2-primal if $P(R)=Nil(R)$. 
\begin{proposition}\label{p2.9}
Every $\mathscr{H}$-semicommutative ring is 2-primal ring.
\end{proposition}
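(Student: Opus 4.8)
The plan is to prove the only nontrivial inclusion. Since the prime radical always satisfies $P(R)\subseteq Nil(R)$, the ring $R$ is $2$-primal exactly when $Nil(R)\subseteq P(R)$, i.e. when every nilpotent element is strongly nilpotent (lies in every prime ideal, by Levitzki's description of $P(R)$), equivalently when the semiprime ring $R/P(R)$ is reduced. First I would record the structure already available: by Proposition~\ref{p1} the ring is nil-semicommutative-II, and (as used above in the matrix proposition) this forces $Nil(R)$ to be an ideal, so $R$ is $NI$ and $R/Nil(R)$ is reduced. Because a ring is reduced as soon as it has no nonzero square-zero elements, and because $a^{2}\in P(R)\subseteq Nil(R)$ already forces $a\in Nil(R)$, the problem reduces to the genuine square-zero case: it suffices to show that every $u\in R$ with $u^{2}=0$ lies in $P(R)$.

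For that case I would imitate the proof that central-semicommutative rings are $2$-primal, replacing the center by the hypercenter. Given $u^{2}=0$, $\mathscr{H}$-semicommutativity applied to $u\cdot u=0$ gives $uRu\subseteq T(R)$, and each $v=uru$ satisfies $v^{2}=ur\,u^{2}\,ru=0$, so $v$ is a square-zero hypercentral element. The useful observation is that the defining relations $vx^{n}=x^{n}v$ survive in every factor ring $R/P$; hence for a prime ideal $P$ the image $\bar v$ is a square-zero hypercentral element of the prime ring $R/P$. By Herstein's hypercenter theorem \cite{IN} (the input to Proposition~\ref{p2.3}), in such a quotient the hypercenter collapses to the center, so $\bar v$ is central; a central square-zero element $c$ of a prime ring generates a nilpotent ideal $cR=Rc$ with $(cR)^{2}=c^{2}R=0$, whence $c=0$. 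Thus $uru\in P$ for every prime $P$, so $uRu\subseteq P(R)$, and the semiprimeness of the ideal $P(R)$ gives $u\in P(R)$, as required.

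The main obstacle is precisely the hypercentral-to-central step: $\mathscr{H}$-semicommutativity delivers only hypercentral insertion, and Herstein's theorem converts hypercentral to central only under the hypothesis that the ring has \emph{no nonzero nil ideals}, which is not automatic for an arbitrary prime factor $R/P$. The real work is therefore to justify this collapse in the relevant prime quotients, either by restricting to minimal primes and exploiting the $NI$ structure of $R$, or by checking directly that a square-zero hypercentral element cannot survive in a prime homomorphic image. A shorter, if less self-contained, route is to combine Proposition~\ref{p1} with the fact that nil-semicommutative-II rings are $2$-primal \cite{YJ}; the argument above is meant to exhibit the mechanism and to pinpoint where the hypercenter enters.
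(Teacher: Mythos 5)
Your proposal correctly isolates the mechanism (square-zero hypercentral elements of the form $uru$), but it is not a complete proof, and the step you flag yourself is the fatal one. Herstein's theorem collapses $T(S)$ to $Z(S)$ only for rings $S$ with no nonzero nil ideals; the ``semiprime'' version quoted in this paper's introduction is not what \cite{IN} proves, and it is in fact false (a simple nil ring is prime, hence semiprime, yet it equals its own hypercenter while being noncommutative). So for an arbitrary prime quotient $R/P$ the hypercentral-to-central collapse is simply unavailable, and neither of your two suggested repairs is carried out. The ``direct check'' is not routine: for a square-zero hypercentral $\bar v$ in a prime ring you only obtain $\bar v\,\bar x^{\,n(x)}\bar v=0$ for one exponent per $x$, not $\bar v S\bar v=0$, so primeness cannot be applied. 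If you instead pass to $R/Nil^{\star}(R)$, where Herstein's hypothesis does hold, your argument yields $uRu\subseteq Nil^{\star}(R)$ and hence only that $R$ is an $NI$ ring --- strictly weaker than 2-primal. The fallback of citing ``nil-semicommutative-II implies 2-primal'' is likewise not established anywhere in this paper, so as written the proposal does not close.

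There is a second, independent gap: the reduction ``it suffices to treat $u^{2}=0$'' is not justified. To show that $R/P(R)$ is reduced you must kill every $\bar a$ with $a^{2}\in P(R)$, and such an $a$ need not be square-zero in $R$ (only nilpotent of some index $\geq 3$); square-zero elements of $R/P(R)$ do not lift to square-zero elements of $R$. The paper avoids this by running an explicit induction on the nilpotency index in $R$ itself ($u^{2}=0$, then $u^{3}=0$, and so on). For the base case the paper also takes a genuinely different route from yours: it never invokes Herstein's theorem, but derives the identity $(us)^{n}uru=0$ directly from $uru\in T(R)$ and $u^{2}=0$ and then appeals to primeness of $P$. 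That said, the paper's final primeness step needs long products $us_{1}us_{2}\cdots u$ with independently varying middle factors, whereas the identity only controls products with a single repeated factor $s$; so the difficulty you located is present, in disguised form, in the paper's own argument as well, and your proposal at least has the merit of naming it.
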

\begin{proof}
We use induction on nilpotency of elements. Let $u \in R$ is such that $u^{2}=0$. Then by hypothesis we get $u r u \in T(R)$ for every $r \in R$. Thus for any $r, s \in R $, we have  $(us)^{n}uru=uru(us)^{n}=uru^{2}s(us)^{n-1}=0$, where $n=n(r,s) \in \mathbb{N}$. let $P$ be any prime ideal, then we have $us(us)^{n-1}uru \in P$ implies $u\in P$. Thus $u \in P(R)$. Again let us consider $u^{3}=0$, then this implies $uru^{2} \in T(R)$ for all $r \in R$. Then for some $ t \in \mathbb{N}$ we have $u^{t}uru^{2}=uruu^{t}=0$. Thus $usu^{t-1}uru^{2} \in T(R)$. Again by similar procedure its easy to see that $ u\in P(R)$. Thus by induction we conclude that $N(R)\subseteq P(R)$. 
\end{proof}

\begin{corollary}
Every $\mathscr{H}$-semicommutative rings is $J$-semicommutative.
\end{corollary}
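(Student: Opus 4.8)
The plan is to exploit Proposition \ref{p2.9}, which we have just shown guarantees that an $\mathscr{H}$-semicommutative ring $R$ is 2-primal, together with the classical fact that the prime radical is contained in the Jacobson radical. The idea is to reduce the required inclusion $uRv \subseteq rad(R)$ to the ordinary semicommutativity of the reduced quotient $R/P(R)$.

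First I would observe that, by Proposition \ref{p2.9}, $P(R) = Nil(R)$, and from this deduce that $R/P(R)$ is reduced: if $\overline{a}^{\,m} = \overline{0}$ in $R/P(R)$ for some $m$, then $a^{m} \in P(R) = Nil(R)$, so $a$ is itself nilpotent, giving $a \in Nil(R) = P(R)$ and hence $\overline{a} = \overline{0}$. Since reduced rings are semicommutative (as recalled in the introduction), $R/P(R)$ is a semicommutative ring.

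Next, take $u, v \in R$ with $uv = 0$. Passing to the quotient, $\overline{u}\,\overline{v} = \overline{uv} = \overline{0}$, so semicommutativity of $R/P(R)$ forces $\overline{u}\,\overline{r}\,\overline{v} = \overline{0}$ for every $r \in R$; equivalently $uRv \subseteq P(R)$. Finally, since $P(R) = Nil_{\star}(R)$ is a nil ideal and every nil ideal lies inside the Jacobson radical, we have $P(R) \subseteq rad(R)$, whence $uRv \subseteq rad(R)$. This is precisely the defining condition for $R$ to be $J$-semicommutative.

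I do not anticipate any real obstacle, which is why this is phrased as a corollary: the two substantive ingredients, namely 2-primality (Proposition \ref{p2.9}) and the containment $P(R) \subseteq rad(R)$, are already in hand or entirely standard. The only step deserving a word of care is the passage through the quotient, but this is immediate once one notes that $uv = 0$ maps to $\overline{0}$ and that $P(R)$ is an ideal, so the product $uRv$ lands in $P(R)$ as claimed.
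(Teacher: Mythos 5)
Your argument is correct: Proposition \ref{p2.9} gives $P(R)=Nil(R)$, so $R/P(R)$ is reduced and hence semicommutative, and the inclusion $uRv\subseteq P(R)\subseteq rad(R)$ follows exactly as you describe. The paper states this corollary without any proof, placing it immediately after the 2-primality result, and your derivation is precisely the argument that placement implies, so this is essentially the intended approach.
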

We recall that for any $u \in Nil(R)$ and a module $_{R}M$, if  the left $R$-homomorphism $f : Ru \rightarrow\, _{R}M$ can be extended to $R\rightarrow\,  _{R}M$, then $_{R}M$ is called {\it left nil-injective}. On the other hand $_{R}M$ is called {\it left  wnil-injective} if for any $u \in nil(R)$, there exits a $n \in \mathbb{N}$ such that $u^{n}\neq 0$ and any left $R$-homomorphism $f : Ru^{n} \rightarrow\, _{R}M$ can be extended to $R\rightarrow\,  _{R}M$. It is obvious that {\it left nil-injective} implies {\it left wnil-injective}. Similarly  $R$ is called {\it left(right) nil-injective} if $_{R}R(R_{R})$ is {\it left(right) nil-injective} and $R$ is called {\it left(right) wnil-injective} if $_{R}R(R_{R})$ is {\it left(right) wnil-injective}.

\begin{proposition}\label{p2.21}
Let R be $\mathscr{H}$-semicommutative ring. If every simple singular left $R$-module is wnil-injective then $R$ is reduced.
\end{proposition}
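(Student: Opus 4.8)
The plan is to argue by contradiction: assuming $R$ is not reduced, I would produce a maximal essential left ideal $M$ whose quotient $R/M$ is a simple singular module, and then use its wnil-injectivity to force $1 \in M$. First I would record the two structural facts already at hand: $R$ is abelian (by the earlier proposition on idempotents) and, by Proposition \ref{p2.9}, $R$ is $2$-primal, so $Nil(R) = P(R)$ is an ideal contained in every maximal left ideal (since $Nil(R) = P(R)$ is a nil ideal, hence lies in $J(R)$). If $Nil(R) \neq (0)$, I would pick a nonzero nilpotent $x$ with $x^{k} = 0$ and $x^{k-1} \neq 0$ (so $k \geq 2$) and set $a = x^{k-1}$; then $a \neq 0$ while $a^{2} = x^{2k-2} = 0$, reducing everything to a square-zero element.

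Next I would build the maximal ideal. Since $a^{2} = 0$ we have $a \in l_{R}(a)$, and $l_{R}(a) \neq R$ (else $a = 1 \cdot a = 0$), so $l_{R}(a)$ lies in some maximal left ideal $M$; in particular $a \in M$. I would then rule out that $M$ is a direct summand: if $_{R}R = M \oplus N$ with $N \neq (0)$, then $N = Re$ and $M = R(1-e)$ for a nonzero idempotent $e$, which is central since $R$ is abelian. From $a \in M = R(1-e)$ one gets $ae = 0$, hence $ea = ae = 0$, so $e \in l_{R}(a) \subseteq M$; but then $e \in M \cap N = (0)$, a contradiction. Therefore $M$ is essential, $R/M$ is a simple singular left $R$-module, and by hypothesis it is wnil-injective.

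The crux is to transfer this injectivity into a contradiction. I would define $f \colon Ra \to R/M$ by $f(ra) = r + M$; this is well defined and left $R$-linear \emph{precisely because} $l_{R}(a) \subseteq M$, and $f(a) = 1 + M \neq \overline{0}$ as $M$ is proper. Because $a^{2} = 0$, the only nonzero power of $a$ is $a$ itself, so the clause ``there exists $n$ with $a^{n} \neq 0$'' in the definition of wnil-injective pins down $n = 1$; thus $f$ extends to a map $R \to R/M$, giving $c \in R$ with $f(a) = ac + M$. Comparing with $f(a) = 1 + M$ yields $1 - ac \in M$. Since $a \in Nil(R)$ and $Nil(R)$ is an ideal, $ac \in Nil(R) \subseteq M$, so $1 = (1-ac) + ac \in M$, contradicting the properness of $M$. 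Hence $Nil(R) = (0)$ and $R$ is reduced.

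The main obstacle is the final passage. Making $f$ well defined forces the choice $M \supseteq l_{R}(a)$ rather than an arbitrary maximal ideal, and turning the extension datum $1 - ac \in M$ into the contradiction $1 \in M$ depends entirely on $ac$ lying in $M$ — which is exactly where $2$-primality (so that $Nil(R) = P(R) \subseteq J(R) \subseteq M$) and the fact that $Nil(R)$ is an ideal are used. I would also double-check that the square-zero reduction is compatible with the existential ``$a^{n} \neq 0$'' clause of wnil-injectivity, which it is, since $a^{2} = 0$ leaves only $n = 1$.
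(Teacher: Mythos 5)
Your proof is correct and follows essentially the same route as the paper: reduce to a square-zero element $a$, take a maximal left ideal $M \supseteq l_{R}(a)$, use abelianness of $\mathscr{H}$-semicommutative rings to rule out $M$ being a direct summand, and then apply wnil-injectivity of $R/M$ to get $1-ac \in M$. The only (cosmetic) difference is at the end: the paper shows $ac$ (its $uv$) is nilpotent by a direct hypercentral computation from $a^{2}=0$ and concludes that $1-ac$ is a unit, whereas you invoke Proposition \ref{p2.9} to place $ac \in Nil(R) \subseteq J(R) \subseteq M$ — both are valid.
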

\begin{proof}
Let us consider $u \in R$ such that $u^{2}=0$. If possible assume that $u \neq 0$. Then $l(u)\neq R$. Thus by hypothesis $\exists$ a maximal left ideal $K$ such that $u \in l(u) \subseteq K$. Again if $K$ is not essential in $_{R}R$ then $\exists$ some idempotent $e \in R$ such that $K=Re=R(1-e)$. This will imply that $u(1-e)=0$. Since $R$ is $\mathscr{H}$-semicommutative implies $(1-e)u=0$. Thus $(1-e) \in l(u) \subseteq l(1-e)$. Thus $(1-e)^{2}=0$ which is a contradiction. Thus $K$ is essential in $_{R}R$ and hence $R/K$ is $wnil-injective$. Now consider a module homomorphism  $f: Ru \rightarrow R/K$ given by $f(ru)=r+K$. Then by hypothesis $\exists$ $g : R \rightarrow R/K$ given as $g(u)=f(u)$. Now we have $1+k=f(u)=g(u)=ug(1)=uv+K$ where $g(1)=v+K$. Thus $1-uv \in K$. Also since $u^{2}=0$ and $R$ is $\mathscr{H}$-semicommutative, there exists some $t \in \mathbb{N}$ such that $(ru)^{t}(uru)=uru(ru)^{t}=0$. Thus we get $uv \in Nil(R)$ which implies $1-uv$ is unit in $R$. This gives a contradiction that $K$ is maximal. Hence wwe get $u=0$ and thus $R$ is reduced.
\end{proof}

A ring $R$ is referred to as a left (or right) $GP-V^{'}$-ring if every simple singular left (or right) $R$-module is GP-injective. A left ideal $L$ of $R$ is termed a $GW$-ideal if, for any $a \in L$, there exists a positive integer $n \geq 1$ such that $a^{n}R \subseteq L$. The concept of a $GW$-ideal for right ideals $K$ of $R$ is defined in a similar manner.

\begin{corollary}
Let R be $\mathscr{H}$-semicommutative ring. Then $R$ is strongly regular if and only if every maximal left ideal of $R$ are GW-ideals and $R$ is left GP-$V^{'}$ ring. 
\end{corollary}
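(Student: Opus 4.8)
The statement is a biconditional, and the forward implication is routine: I would note that a strongly regular ring is reduced and von Neumann regular, hence duo, so every maximal left ideal $M$ is two-sided and thus a $GW$-ideal (with exponent $1$); moreover every principal left ideal $Ra$ equals $Re$ for a central idempotent $e$, so it is a direct summand of $R$, which forces every left $R$-module to be $p$-injective and in particular $GP$-injective. Thus $R$ is a left $GP$-$V^{'}$-ring whose maximal left ideals are $GW$-ideals, and I will concentrate on the converse.

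First I would reduce the problem to the reduced case. A $GP$-injective module is in particular \emph{wnil}-injective (restrict the defining condition to nonzero nilpotent elements), so the hypothesis that every simple singular left module is $GP$-injective gives that every simple singular left module is \emph{wnil}-injective. Since $R$ is $\mathscr{H}$-semicommutative, Proposition \ref{p2.21} then yields that $R$ is reduced. From now on I use freely that $R$ is reduced, hence abelian with central idempotents, symmetric, and $l_{R}(a)=l_{R}(a^{n})$ for every $a$ and every $n\ge 1$.

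The heart of the proof is to show that $a\in Ra^{2}$ for every $a\in R$; as $R$ is reduced this is exactly strong regularity. Fix $0\neq a$ and set $L=Ra^{2}+l_{R}(a)$, noting $l_{R}(a)=l_{R}(a^{2})$. If $L=R$, say $1=sa^{2}+t$ with $ta=0$, then right multiplication by $a$ gives $a=sa^{3}\in Ra^{2}$; so it suffices to prove $L=R$. Suppose not, and take a maximal left ideal $M\supseteq L$. If $M$ is not essential then $M=R(1-e)$ for a nonzero central idempotent $e$; from $a^{2}\in M$ one gets $a^{2}e=0$, hence $(ae)^{2}=0$ and $ae=0$ by reducedness, so $e\in l_{R}(a)\subseteq M=R(1-e)$, forcing $e=0$, a contradiction. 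Hence $M$ is essential and $R/M$ is a simple singular, therefore $GP$-injective, left $R$-module.

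It remains to contradict $M\neq R$ in the essential case, and this is the step where the two one-sided hypotheses must be reconciled. Since $a^{2}\in M$ and $M$ is a $GW$-ideal, there is $k\ge 1$ with $b:=a^{2k}$ satisfying $bR\subseteq M$; because $R$ is reduced, $b\neq 0$ and $l_{R}(b^{n})=l_{R}(b)=l_{R}(a)\subseteq M$. Applying $GP$-injectivity of $R/M$ to $b$ produces an $n\ge 1$ with $b^{n}\neq 0$ such that the well-defined left $R$-map $\varphi\colon Rb^{n}\to R/M$, $\varphi(rb^{n})=r+M$, extends to some $g\colon R\to R/M$ with $g(1)=c+M$. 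Comparing $\varphi(b^{n})=1+M$ with $g(b^{n})=b^{n}c+M$ gives $1-b^{n}c\in M$. Now the $GW$-provided inclusion $bR\subseteq M$ is exactly what lets me absorb the right factor: $bc\in bR\subseteq M$, and since $M$ is a left ideal $b^{n}c=b^{n-1}(bc)\in M$, whence $1\in M$, the desired contradiction. I expect this reconciliation — manufacturing from the left-sided $GP$-injective extension a relation that the right-sided information $bR\subseteq M$ (only available after invoking the $GW$-ideal property) can collapse — to be the main obstacle; the well-definedness of $\varphi$ via $l_{R}(b^{n})=l_{R}(b)$ and the choice $b=a^{2k}$ are the points that make it go through.
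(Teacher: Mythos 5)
Your proof is correct, and its first step coincides with the paper's: both pass from the hypothesis that $R$ is a left GP-$V^{'}$ ring to the hypothesis of Proposition \ref{p2.21} (every GP-injective module is wnil-injective) and conclude that $R$ is reduced. From that point the two arguments diverge. The paper finishes in one line by noting that a reduced $\mathscr{H}$-semicommutative ring is central semicommutative (Proposition \ref{p2.3}) and then citing Theorem 2.13 of \cite{LJ}, which asserts precisely that such a ring, being left GP-$V^{'}$ with all maximal left ideals GW-ideals, is strongly regular. You instead prove that implication from scratch: for $a\neq 0$ you show $Ra^{2}+l_{R}(a)=R$ by ruling out a maximal left ideal $M$ containing it --- the non-essential case via a central idempotent and reducedness, the essential case by combining the GP-injective extension of $\varphi(rb^{n})=r+M$ (well defined because $l_{R}(b^{n})=l_{R}(a)\subseteq M$ in a reduced ring) with the inclusion $bR\subseteq M$ supplied by the GW-ideal property applied to $b=a^{2k}$, which collapses $b^{n}c$ into $M$ and forces $1\in M$. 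Every step checks out, including the routine forward implication, which the paper dismisses as well known. What your route buys is self-containedness; what the paper's route buys is brevity and the intermediate observation that the ring is in fact central semicommutative.
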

\begin{proof}
Let us consider every maximal left ideal of $R$ are GW-ideals and $R$ is left GP-$V^{'}$ ring. Then by \ref{p2.21}, $R$ is reduced. Thus by Theorem \ref{p2.3}, $R$ is central semicommutative and Hence by Theorem 2.13 in \cite{LJ}, $R$ is strongly regular. The converse part is a well known result.
\end{proof}
\begin{corollary}
Let R be $\mathscr{H}$-semicommutative ring. Then $R$ is strongly regular if and only if every maximal right ideal of $R$ are GW-ideals and $R$ is right GP-$V^{'}$ ring. 
\end{corollary}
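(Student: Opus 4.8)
The plan is to establish this as the exact right-hand mirror of the preceding corollary, exploiting that strong regularity is a left--right symmetric property. The converse implication (strong regularity forcing every maximal right ideal to be a $GW$-ideal and $R$ to be right $GP$-$V'$) is the standard, well-known direction and requires no use of $\mathscr{H}$-semicommutativity. Thus the whole content lies in the forward direction, and I would reduce it to proving the right-handed analogue of Proposition~\ref{p2.21}: \emph{if every simple singular right $R$-module is wnil-injective and $R$ is $\mathscr{H}$-semicommutative, then $R$ is reduced.} Note first that a $GP$-injective module is wnil-injective, so the hypothesis that $R$ is right $GP$-$V'$ supplies exactly the wnil-injectivity of the simple singular right modules that this analogue requires.

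To prove the right-handed reducedness statement I would run the mirror of the argument in Proposition~\ref{p2.21}. Suppose $u\in R$ with $u^{2}=0$ and $u\neq 0$, and set $r(u)=\{x\in R: ux=0\}$; since $u^{2}=0$ we have $u\in r(u)\neq R$, so choose a maximal right ideal $K$ with $u\in r(u)\subseteq K$. If $K$ were not essential in $R_{R}$, then $K=eR$ for an idempotent $e$, which is central because $R$ is abelian (being $\mathscr{H}$-semicommutative); then $ue=eu=u$ gives $u(1-e)=0$, so $1-e\in r(u)\subseteq eR$, whence $e(1-e)=1-e$, and since $e(1-e)=0$ we get $1-e=0$, contradicting $K\neq R$. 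Hence $K$ is essential and $R/K$ is a simple singular, and so wnil-injective, right module. Consider the right $R$-homomorphism $f:uR\to R/K$ given by $f(ur)=r+K$, well defined because $u(r-r')=0$ puts $r-r'$ into $r(u)\subseteq K$. Extending $f$ to $g:R\to R/K$ and writing $g(1)=v+K$ yields $1+K=g(u)=g(1)u=vu+K$, so $1-vu\in K$.

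The key remaining point, and the place where $\mathscr{H}$-semicommutativity does the real work, is to show that $vu$ is nilpotent. Since $u^{2}=0$, the definition gives $uRu\subseteq T(R)$, so in particular $uvu\in T(R)$; hence there is $t\in\mathbb{N}$ with $uvu\,(vu)^{t}=(vu)^{t}uvu$, and the right-hand side vanishes since it contains the factor $u^{2}=0$, forcing $u(vu)^{t+1}=0$ and therefore $(vu)^{t+2}=v\,u(vu)^{t+1}=0$. Thus $vu\in Nil(R)$, so $1-vu$ is a unit, contradicting $K\neq R$. This gives $u=0$, so $R$ has no nonzero square-zero element and is reduced. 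Once $R$ is reduced it has no nonzero nil ideal, so by Proposition~\ref{p2.3} it is central semicommutative, and the right-hand analogue of Theorem~2.13 in \cite{LJ} then yields that $R$ is strongly regular. I expect the main obstacle to be essentially bookkeeping: confirming that each step of Proposition~\ref{p2.21} dualizes cleanly — in particular that the essentiality argument and the nilpotency computation survive the switch from left to right annihilators — together with verifying that the strong-regularity criterion of \cite{LJ} is available in its right-handed form.
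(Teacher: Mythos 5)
Your proposal is correct and follows exactly the route the paper intends: the paper states this corollary without proof as the right-handed mirror of the preceding one, whose proof runs reducedness (via the wnil-injectivity argument of Proposition~\ref{p2.21}, applicable since GP-injective implies wnil-injective), then central semicommutativity from Proposition~\ref{p2.3}, then strong regularity from the cited theorem of \cite{LJ}. Your explicit dualization of Proposition~\ref{p2.21} to right modules, including the nilpotency computation for $vu$ using $uRu\subseteq T(R)$, is a faithful and correct carrying-out of that same argument.
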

Recall that $R$ is called left SF-ring if every simple left $R$-module is flat.
\begin{proposition}\label{p2.14}
Let R be $\mathscr{H}$-semicommutative. If R is left SF, then $R$ is strongly regular
\end{proposition}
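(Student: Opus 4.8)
The plan is to show \(R\) is strongly regular by proving the identity \(Ra+l_{R}(a)=R\) for every \(a\in R\), where \(l_{R}(a)=\{x\in R:xa=0\}\). The only input I take from the hypothesis that \(R\) is left \(SF\) is the flatness estimate: if \(K\) is a left ideal with \({}_{R}(R/K)\) flat, then \(I\cap K=IK\) for every right ideal \(I\); specialising to \(I=aR\) with \(a\in K\) gives \(a\in aK\), that is, \(a=ac\) for some \(c\in K\) (using \((aR)K=aK\)). Since every simple left module \(R/M\) (\(M\) a maximal left ideal) is flat, I obtain the working tool: whenever \(a\in M\) there is \(m\in M\) with \(a=am\).

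Granting \(Ra+l_{R}(a)=R\) for all \(a\), the proposition follows at once: writing \(1=ra+t\) with \(t\in l_{R}(a)\) and multiplying on the right by \(a\) gives \(a=ra^{2}+ta=ra^{2}\), so \(a\in Ra^{2}\) for every \(a\), which is a characterisation of strongly regular rings. Moreover this identity is transparent once \(R\) is known to be reduced: if \(Ra+l_{R}(a)\subseteq M\) for a maximal left ideal \(M\), then \(a\in Ra\subseteq M\), the \(SF\)-step yields \(a=am\) with \(m\in M\), whence \(a(1-m)=0\); in a reduced ring annihilators are two-sided and \(l_{R}(a)=r_{R}(a)\), so \(1-m\in r_{R}(a)=l_{R}(a)\subseteq M\), giving \(1\in M\), a contradiction. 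Thus the entire proposition reduces to proving that \(R\) is reduced.

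To prove \(R\) reduced it suffices to show that \(a^{2}=0\) forces \(a=0\) (a minimal-index argument then removes higher nilpotents). So fix \(a\) with \(a^{2}=0\) and suppose \(a\neq0\); I again try to establish \(Ra+l_{R}(a)=R\), which would give \(a=ra^{2}=0\). If instead \(Ra+l_{R}(a)\subseteq M\), the \(SF\)-step gives \(a=am\) with \(m\in M\) and \(a(1-m)=0\). Now \(\mathscr{H}\)-semicommutativity enters through results already proved: by Proposition \ref{p2.9} the ring is \(2\)-primal, so \(Nil(R)\) is an ideal, and by Proposition \ref{p1} it is nil-semicommutative-II. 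From \(a(1-m)=0\) one computes \(\bigl((1-m)a\bigr)^{2}=(1-m)\bigl(a(1-m)\bigr)a=0\) and \(\bigl((1-m)a\bigr)a=(1-m)a^{2}=0\), so \((1-m)a\) is a square-zero element lying in \(l_{R}(a)\subseteq M\); and choosing \(w\in R\), \(\mu\in M\) with \(w(1-m)+\mu=1\) and applying \(\mathscr{H}\)-semicommutativity to \(a(1-m)=0\) gives \(a-a\mu=aw(1-m)\in T(R)\cap Nil(R)\).

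The main obstacle is precisely this reconciliation of sides. Left \(SF\) naturally produces the right-handed relation \(a=am\), while the element trapped inside \(M\) is the left annihilator \(l_{R}(a)\); to reach the contradiction \(1=(1-m)+m\in M\) I must promote \(a(1-m)=0\) to \(1-m\in M\), i.e.\ I need a reversibility that ordinary semicommutativity supplies for free but \(\mathscr{H}\)-semicommutativity supplies only up to the hypercenter — and \(T(R)\) is genuinely larger than \(Z(R)\), as the nil subrings \(S_{n}^{'}(R)\), which are \(\mathscr{H}\)-semicommutative yet not reversible, already show. The way I would close the gap is through Herstein's coincidence theorem recorded in Proposition \ref{p2.3}: I would use the \(SF\) hypothesis to rule out nonzero nil ideals on the relevant quotient, so that \(T(R)\) collapses onto \(Z(R)\) and the hypercentral insertion of factors becomes a central one; the central insertion then forces the square-zero element \((1-m)a\) and the nilpotent hypercentral element \(a-a\mu\) to vanish, delivering \(1-m\in M\) and hence \(a=0\). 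Verifying that the \(SF\) hypothesis really does make the pertinent part semiprime — so that Herstein's theorem applies and the hypercenter reduces to the centre — is the delicate point on which the proof turns.
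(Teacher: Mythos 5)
Your high-level strategy is the right one, and it is in fact the paper's: establish that $R$ is semiprime, invoke Herstein's coincidence theorem (Proposition \ref{p2.3}) to collapse $T(R)$ onto $Z(R)$ so that $R$ becomes central semicommutative (and then reduced), and finish with the standard $Ra+l_{R}(a)=R$ argument for strongly regular rings over a left $SF$-ring. Your flatness tool ($a\in K$ with ${}_{R}(R/K)$ flat gives $a=at$ for some $t\in K$) is also exactly what the paper uses. But the proposal has a genuine gap at precisely the point you flag as ``the delicate point on which the proof turns'': you never actually prove that the $SF$ hypothesis forces semiprimeness, and without that step nothing else in your argument closes, since the reversibility you need ($a(1-m)=0\Rightarrow 1-m\in M$) is only available after $T(R)=Z(R)$ has been secured.

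The paper fills this gap with a self-contained double application of the flatness criterion, and it is worth seeing that no $\mathscr{H}$-semicommutativity is needed for it. Suppose $uRu=0$ with $u\neq 0$. Then $r(uR)\neq R$, so $u\in r(uR)\subseteq K$ for some maximal left ideal $K$; flatness of $R/K$ gives $t\in K$ with $u=ut$. If some $ug(1-t)\neq 0$, note that $\bigl(ug(1-t)\bigr)^{2}=u\,[g(1-t)u]\,g(1-t)=0$ because $uRu=0$, so $ug(1-t)\in l\bigl(ug(1-t)\bigr)\subseteq L$ for a maximal left ideal $L$; a second application of flatness yields $l\in L$ with $ug(1-t)=l\,ug(1-t)$, hence $1-l\in l\bigl(ug(1-t)\bigr)\subseteq L$ and $1\in L$, a contradiction. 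Therefore $uR(1-t)=0$, so $1-t\in r(uR)\subseteq K$ and $1=(1-t)+t\in K$, again a contradiction; hence $u=0$ and $R$ is semiprime. From there your outline goes through: Proposition \ref{p2.3} gives central semicommutativity, semiprimeness then kills square-zero elements (if $a^{2}=0$ then $aRa\subseteq Z(R)$ and $(aRa)R(aRa)=0$, so $aRa=0$ and $a=0$), and your $Ra+l_{R}(a)=R$ computation (or the cited Theorem 2.17 of \cite{LJ}) delivers strong regularity. So the verdict is: correct skeleton, but the load-bearing lemma is asserted rather than proved.
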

\begin{proof}
Let us consider $u \in R$ satisfies $uRu=0$. If possible assume that $u \neq 0$. Then there exits a  maximal left ideal $K$ such that 
$u \in r(uR) \subseteq K$. Again by hypothesis that $R$ is left SF, there exits a $t \in K$ such that $u=ut$. Thus $u(1-t)=0$ and since $R$ is $\mathscr{H}$, we have $uR(1-t) \subset T(R)$. Again if possible let us assume that $uR(1-t)\neq 0$. Then again by similar process we have a $g \in R$ such that $ug(1-t)\neq 0$. This implies $l(ug(1-t))\neq R$. Then again $\exists$ a maximal left ideal $L$ such that $l(ug(1-t)) \subseteq L$. Also  $uRu=0$ implies $ug(1-t)ug(1-t)=0$. This implies $ug(1-t) \in l(ug(1-t)) \subseteq L$. Thus there exits $l \in L$ such that $ug(1-t)=lug(1-t)\Rightarrow (1-l)(ug(1-t))=0\Rightarrow 1-l \in l(ug(1-t))\subseteq L$. Thus $1 \in L$ which gives us a contradictions. Thus $uR(1-t)=0\Rightarrow 1-t \in r(uR) \subseteq K\Rightarrow \, 1 \in K$ which is again a contradiction. This shows that $u=0$. Thus $R$ is semiprime. Hence by Proposition 2.3 $R$ is central semicommutative. Thus by Theorem 2.17 in \cite{LJ}, $R$ is strongly regular.
\end{proof}
 Recall that a right ideal $I$ of $R$ is called right essential if $I\cap J\neq 0$ for every non-zero right ideal $J$. Left essential ideals are defined similarly. The right singular ideal $\mathcal{Z}_{r}(R)$ is defined as the set of all $r \in R$ such that $R_{R}(r)$ is essential right ideal. left singular ideal $\mathcal{Z}_{l}(R)$ is defined similarly. It is well known that $\mathcal{Z}_{r}(R)$ and $\mathcal{Z}_{l}(R)$ are two sided ideals. In \cite{MA}, $R$ is called {\it nil singular } if both $\mathcal{Z}_{r}(R)$ and $\mathcal{Z}_{l}(R)$ are nil ideals. It is well known that if the  {\it A.C.C} condition is  satisfy on every left and right annihilator of $r \in R$, then $R$ is {\it nil singular}. Here we record some other  conditions for $R$ to be {\it nil singular}.
\begin{proposition}
Let $R$ be $\mathscr{H}$-semicommutative. Then $R$ is nil singular if any one of the following condition holds. 
\begin{itemize}
\item[(1).] $R/Nil(R)$ is left  SF-ring.
\item[(2).] $R/Nil(R)$ is left GP-$V^{'}$ ring with every maximal left  ideals  are GW-ideals.
\item[(3).] $R/Nil(R)$ is right GP-$V^{'}$ ring with every maximal right  ideals  are GW-ideals.
\end{itemize}
\end{proposition}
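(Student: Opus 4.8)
The plan is to push all three hypotheses onto the quotient $\overline{R}=R/Nil(R)$, establish that $\overline{R}$ is strongly regular, and then pull information about singular ideals back to $R$. The first thing I would record is that, by Proposition \ref{p2.9}, $R$ is $2$-primal, so $Nil(R)=P(R)$ and $\overline{R}$ is reduced; in particular $\overline{R}$ is itself $\mathscr{H}$-semicommutative, since every reduced ring is. This is exactly what lets the earlier regularity criteria be applied to $\overline{R}$. In case (1), $\overline{R}$ is left $SF$ and $\mathscr{H}$-semicommutative, so Proposition \ref{p2.14} yields that $\overline{R}$ is strongly regular; in cases (2) and (3) the two corollaries to Proposition \ref{p2.21} apply verbatim to $\overline{R}$ and give the same conclusion. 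Thus the first step always ends with the same output: $\overline{R}$ is strongly regular, hence reduced.

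Next I would observe that, since $Nil(R)$ is a nil ideal, it suffices to prove $\mathcal{Z}_{l}(R)\subseteq Nil(R)$ and $\mathcal{Z}_{r}(R)\subseteq Nil(R)$: both singular ideals then consist of nilpotent elements and are nil, which is precisely what nil-singularity demands. I would treat $\mathcal{Z}_{l}(R)$ and leave $\mathcal{Z}_{r}(R)$ to the mirror-image argument, since strong regularity of $\overline{R}$ is a left--right symmetric condition. So I fix $a\in\mathcal{Z}_{l}(R)$ and aim to show $\overline{a}=0$ in $\overline{R}$, arguing by contradiction from $\overline{a}\neq 0$.

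The heart of the argument is to convert the strong regularity of $\overline{a}$ into genuine invertibility inside $R$. Because $\overline{R}$ is strongly regular, the principal ideal of $\overline{a}$ is cut out by a central idempotent $\overline{e}$, with $\overline{R}\,\overline{a}=\overline{e}\,\overline{R}=\overline{a}\,\overline{R}$, so $\overline{a}$ is a unit of the corner ring $\overline{e}\,\overline{R}$. Now I would invoke two features of $R$: it is abelian (every $\mathscr{H}$-semicommutative ring is), so all idempotents are central, and $Nil(R)$ is nil, so idempotents lift modulo $Nil(R)$. Lifting $\overline{e}$ produces a nonzero central idempotent $e\in R$ and a ring decomposition $R=eR\times(1-e)R$, in which $eR/Nil(eR)\cong\overline{e}\,\overline{R}$ carries the image of $a_{1}:=ea$ to the unit $\overline{a}$. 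Hence $a_{1}$ is a unit of $eR$ modulo the nil ideal $Nil(eR)$; lifting the inverse to some $c\in eR$, both $a_{1}c$ and $ca_{1}$ differ from $e$ by a nilpotent element and are therefore units of $eR$, so $a_{1}$ has both a left and a right inverse, i.e. $a_{1}$ is a unit of $eR$ and $l_{eR}(a_{1})=0$.

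The contradiction then falls out of the decomposition: in $R=eR\times(1-e)R$ one has $l_{R}(a)=l_{eR}(a_{1})\times l_{(1-e)R}((1-e)a)=0\times l_{(1-e)R}((1-e)a)$, and a left ideal of a direct product is essential only if each of its components is essential in the respective factor. Since the $eR$-component is the zero ideal in the nonzero ring $eR$, $l_{R}(a)$ is not essential, contradicting $a\in\mathcal{Z}_{l}(R)$; hence $\overline{a}=0$ and $a\in Nil(R)$. The step I expect to be the real obstacle is exactly this transfer across the quotient: essentiality of annihilators does not descend to $R/Nil(R)$ in any naive way, and the whole point of the idempotent lifting together with abelianness is to replace that failed descent by honest invertibility of $ea$ in the corner $eR$, which is what actually destroys the annihilator and produces the contradiction.
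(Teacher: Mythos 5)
Your proof is correct, and its first half coincides exactly with the paper's: both arguments use Proposition \ref{p2.9} to see that $R$ is $2$-primal, observe that $\overline{R}=R/Nil(R)$ is reduced and hence again $\mathscr{H}$-semicommutative, and then invoke Proposition \ref{p2.14} (resp.\ the two corollaries to Proposition \ref{p2.21}) to conclude that $\overline{R}$ is strongly regular in each of the three cases. Where you diverge is in the passage from this to nil-singularity of $R$. The paper simply notes that a ring whose quotient by a nil ideal is regular is $\pi$-regular, and then cites Proposition 2.2 of \cite{MA} for the fact that every $\pi$-regular ring is nil-singular; the whole second half is a two-line appeal to an external result. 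You instead prove the containment $\mathcal{Z}_{l}(R)\subseteq Nil(R)$ from scratch: for $a\notin Nil(R)$ you take the central idempotent $\overline{e}$ generating $\overline{a}\,\overline{R}$ in the strongly regular quotient, lift it through the nil ideal to a central idempotent $e$ of the abelian ring $R$, lift the invertibility of $\overline{a}$ in $\overline{e}\,\overline{R}$ to genuine invertibility of $ea$ in $eR$ (units lift modulo nil ideals), and then read off from $l_{R}(a)=l_{eR}(ea)\oplus l_{(1-e)R}((1-e)a)$ with vanishing first component that $l_{R}(a)$ cannot be essential. All the ingredients you use (idempotents and units lift modulo nil ideals; abelianness makes the lifted idempotent central; essentiality in a product is componentwise) are sound, so your argument is a valid, self-contained replacement for the paper's citation -- at the cost of being longer, and of proving only the special case needed here rather than the general statement that $\pi$-regular rings are nil-singular, which the cited reference supplies.
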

\begin{proof}
 $R$ is $\mathscr{H}$-semicommutative. Thus by proposition \ref{p2} $R$ is abelian and by  Proposition \ref{p2.9}, $R$ is 2-primal. This implies $Nil(R)$ is ideal and obviously $R/Nil(R)$ is reduced and hence $R/Nil(R)$ is $\mathscr{H}$-semicommutative. By Proposition \ref{p2.14} $R/Nil(R)$ is strongly regular. Hence $R/Nil(R)$ is regular. Thus $R$ is $\pi$-regular. Since every $\pi$-regular ring is nil-singular(see Proposition 2.2 in \cite{MA}). Thus result (1) follows. The proof for (2) and (3) follows similarly.
\end{proof}
Next, we study the localization of rings. Let $S$ denote a multiplicatively closed subset of $R$ consisting of centrally regular elements. Then $S^{-1}R$ is the localization of $R$ at $S$  

\begin{proposition}\label{p6}
Let R be a ring. Then $R$ is $\mathscr{H}$-semicommutative if and only if $S^{-1}R$ is $\mathscr{H}$-semicommutative.
\end{proposition}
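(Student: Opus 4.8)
The plan is to prove the two implications separately, with the backward direction essentially free and the forward direction carrying the real content. For the backward implication I would invoke the fact, recorded just after Definition \ref{d1}, that the class of $\mathscr{H}$-semicommutative rings is closed under subrings. Since every element of $S$ is central and regular, the canonical map $R \to S^{-1}R$, $r \mapsto r/1$, is an injective ring homomorphism: if $r/1 = 0$ then $wr = 0$ for some $w \in S$, and regularity of $w$ forces $r = 0$. Thus $R$ is isomorphic to the subring $\{\,r/1 : r \in R\,\}$ of $S^{-1}R$, and if $S^{-1}R$ is $\mathscr{H}$-semicommutative, so is $R$.

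For the forward implication I would assume $R$ is $\mathscr{H}$-semicommutative and take $\alpha, \beta \in S^{-1}R$ with $\alpha\beta = 0$, writing $\alpha = a/s$ and $\beta = b/t$ with $a, b \in R$ and $s, t \in S$. Because $s, t$ are central, $\alpha\beta = ab/(st)$, so $ab/(st) = 0$ in $S^{-1}R$ yields $w\,ab = 0$ for some $w \in S$; regularity of $w$ then gives the honest equation $ab = 0$ in $R$. Applying $\mathscr{H}$-semicommutativity of $R$, I obtain $acb \in T(R)$ for every $c \in R$. Given an arbitrary $\gamma = c/u \in S^{-1}R$ with $c \in R$, $u \in S$, centrality of the denominators lets me write $\alpha\gamma\beta = (acb)/(sut)$, so it suffices to show that this single fraction lies in $T(S^{-1}R)$.

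The crux, which I would isolate as the key step, is the following localization property of the hypercenter: if $x \in T(R)$ and $v \in S$, then $x/v \in T(S^{-1}R)$. To verify it I would fix $\xi \in S^{-1}R$, write $\xi = p/q$ with $p \in R$, $q \in S$, so that $\xi^{m} = p^{m}/q^{m}$, and then use that $x \in T(R)$ supplies some $m = n(p,x) \in \mathbb{N}$ with $x p^{m} = p^{m} x$ in $R$. Since $v$ and $q$ are central, the denominators of $(x/v)\xi^{m} = (xp^{m})/(vq^{m})$ and $\xi^{m}(x/v) = (p^{m}x)/(q^{m}v)$ coincide, whence $(x/v)\xi^{m} = \xi^{m}(x/v)$ in $S^{-1}R$, which is exactly hypercentrality of $x/v$ relative to $\xi$. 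Applying this with $x = acb$ and $v = sut$ shows $\alpha\gamma\beta \in T(S^{-1}R)$ and completes the forward direction.

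I expect the main obstacle to be precisely this hypercenter-localization step, and in particular the bookkeeping of where centrality and regularity of $S$ enter: regularity is what converts the vanishing of a fraction back into the equation $ab = 0$ in $R$, while centrality is what allows denominators to be pulled out of products and cancelled, so that an exponent witnessing hypercentrality of $x$ over $p$ in $R$ simultaneously witnesses hypercentrality of $x/v$ over $\xi$ in $S^{-1}R$. Once these two roles are pinned down the remaining manipulations are routine, but the argument genuinely fails if $S$ is not assumed central.
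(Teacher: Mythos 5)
Your proposal is correct and follows essentially the same route as the paper's proof: use centrality and regularity of $S$ to reduce $\alpha\beta=0$ to $ab=0$ in $R$, apply the hypothesis to get $acb\in T(R)$, push this into $T(S^{-1}R)$, and dispose of the converse by the closure of $\mathscr{H}$-semicommutative rings under subrings. The only difference is that you explicitly isolate and verify the step the paper leaves implicit --- that $x\in T(R)$ and $v\in S$ imply $x/v\in T(S^{-1}R)$, via $\xi^{m}=p^{m}/q^{m}$ and the same exponent $m=n(p,x)$ --- which is exactly where the paper's assertion ``$a^{-1}b^{-1}u^{-1}ptq\in T(R)$'' needs justification.
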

\begin{proof}
Let us suppose that $R$ is $\mathscr{H}$-semicommutative. let $p_{1}=a^{-1}p $ and $q_{1}=b^{-1}q$ are any elements of $S^{-1}R$  are such that $p_{1}q_{1}=0$. Since  $a , b \, \in S$ and $S$ is central regular subset implies $p_{1}q_{1}=a^{-1}pb^{-1}q=a^{-1}b^{-1}pq=0$. This implies $pq=0$. So by hypothesis $prq \in T(R)$ for every $r \in R$. Let $u \in S$ and $t \in R$. Then we have $a^{-1}b^{-1}u^{-1} \in C(R)\subseteq T(R)$ and $ptq \in T(R)$. So $a^{-1}b^{-1}u^{-1}ptq=(a^{-1}p)(u^{-1}t)(b^{-1}q)=p_{1}t_{1}q_{1}\in T(R)$.  Thus $S^{-1}R$ is $\mathscr{H}$-{\it semicommutative}. Conversely let us suppose that $S^{-1}R$ is $\mathscr{H}$-{\it semicommutative}. Since $R$ is embedded as a subring in $S^{-1}R$ and  $\mathscr{H}$-{\it semicommutative} rings are closed under subring. Thus $R$  is  $\mathscr{H}$-{\it semicommutative}.
\end{proof}
\begin{corollary}
let $R$ be a ring. Then $R[x]$ is $\mathscr{H}$-{\it semicommutative} if and only if $R[x, x^{-1}]$ is $\mathscr{H}$-{\it semicommutative}.
\end{corollary}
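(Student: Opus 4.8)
The plan is to recognize the Laurent polynomial ring $R[x,x^{-1}]$ as a localization of the ordinary polynomial ring $R[x]$, and then to invoke Proposition \ref{p6} verbatim, with $R[x]$ now playing the role of the base ring. Concretely, I would take the multiplicatively closed subset $S = \{\,x^{n} : n \geq 0\,\} \subseteq R[x]$, and aim to identify $S^{-1}(R[x])$ with $R[x,x^{-1}]$.

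First I would verify that $S$ satisfies the hypotheses demanded by Proposition \ref{p6}, namely that it is a multiplicatively closed set of centrally regular elements of $R[x]$. Multiplicative closure is immediate since $x^{m}x^{n}=x^{m+n}$ and $1=x^{0}\in S$. Each $x^{n}$ is central in $R[x]$ because the indeterminate $x$ commutes with every coefficient from $R$ as well as with itself, so $x^{n}$ lies in the center; and each $x^{n}$ is regular, since multiplication by $x^{n}$ on $R[x]$ is injective (it merely shifts degrees and cannot annihilate a nonzero polynomial). Hence every element of $S$ is centrally regular. Next I would record that inverting all powers of $x$ in $R[x]$ produces precisely the Laurent polynomial ring, i.e. $S^{-1}(R[x]) = R[x,x^{-1}]$.

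Finally, applying Proposition \ref{p6} with $R[x]$ in place of $R$ and this choice of $S$, I obtain that $R[x]$ is $\mathscr{H}$-semicommutative if and only if $S^{-1}(R[x]) = R[x,x^{-1}]$ is $\mathscr{H}$-semicommutative, which is exactly the asserted equivalence. There is no genuine obstacle here: the only point requiring care is the routine bookkeeping that the hypotheses of Proposition \ref{p6} are met, in particular the centrality and regularity of the powers of $x$, after which the corollary is a direct specialization of the localization result and needs no further argument.
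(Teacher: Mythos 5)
Your proposal is correct and is essentially identical to the paper's own proof: the paper also takes $S=\{1,x,x^{2},\dots\}$ as a multiplicatively closed set of central regular elements in $R[x]$, identifies $S^{-1}(R[x])$ with $R[x,x^{-1}]$, and applies Proposition \ref{p6}. You simply spell out the routine verifications in more detail.
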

\begin{proof}
Consider $S=\{1, x, x^{2}, \hdots\}$. Then  clearly $S$ is multiplicative closed subset of $R[x]$ consisting of central regular elements. Thus the proof follows form  Proposition \ref{p6}.
\end{proof}

Next we record an example of ring $R$ and an ideal $I$ of $R$ such that both $R$ and $I$ are $\mathscr{H}$-{\it semicommutaive} but the quotient ring $R/I$ is not $\mathscr{H}$-{\it semicommutaive}.
\begin{example}
Consider $R=\begin{pmatrix}
F & F & F \\ 
0 & F & F \\ 
0 & 0 & F
\end{pmatrix} $ and $I=\begin{pmatrix}
0 & F & F \\ 
0 & 0 & F \\ 
0 & 0 & 0
\end{pmatrix}$, where $F$ represent a field. Then $R/I\cong F\oplus F\oplus F$. Then its clear that $R/I$ is $\mathscr{H}$-{\it semicommutaive} and since $I \subseteq Nil(R)$ implies $I$ is also $\mathscr{H}$-{\it semicommutaive}. However $R$ is not $\mathscr{H}$-{\it semicommutaive}. For this let $A=\begin{pmatrix}
0 & 1 & -1 \\ 
0 & 0 & 0 \\ 
0 & 0 & 0
\end{pmatrix}$ and $B=\begin{pmatrix}
0 & 0 &  0\\ 
0 & 0 & 1 \\ 
0 & 0 & 1
\end{pmatrix}$. Then we have $A.B=0$. Let $C=\begin{pmatrix}
1 & 1 & 0 \\ 
0 & 1 & 0 \\ 
0 & 0 & c
\end{pmatrix}$. Then $ACB=\begin{pmatrix}
0 & 0 & 1 \\ 
0 & 0 & 0 \\ 
0 & 0 & 0
\end{pmatrix}$ Let us choose $X=\begin{pmatrix}
0 & 0 & 0 \\ 
0 & 0 & 0 \\ 
0 & 0 & c
\end{pmatrix}$ any arbitrary element of $R$ where $c \neq 0 $. Then we can see that for any $ n \in \mathbb{N}$ $X^{n}=\begin{pmatrix}
0& 0 & 0 \\ 
0 & 0 & 0 \\ 
0 & 0 & c^{n}
\end{pmatrix}$.  Thus $X^{n}ACB=0$ and $ACBX^{n}=\begin{pmatrix}
0 & 0 & c^{n} \\ 
0 & 0 & 0 \\ 
0 & 0 & 0
\end{pmatrix}$. Thus $X^{n}ACB \neq ACBX^{n}$ for any $n \in \mathbb{N}$. Hence $R$ is not $\mathscr{H}$-semicommutative.
\end{example} 
Here, we provide an example of a ring $R$ that is $\mathscr{H}$-{\it semicommutative}; however, $R/I$ is not $\mathscr{H}$-{\it semicommutative} for some ideal $I$ of $R$.

\begin{example}
Let $R=D[x,y]$ be a ring of polynomial over two variables where $D$ is a division ring. Let us consider $I=<x^{2}>$ be an ideal of $R$ such that $xy^{n}\neq y^{n}x$ for all $n \in \mathbb{N}$. Since $R$ is{\it  semicommutaive} implies $R$ is $\mathscr{H}$-{\it semicommutative} but, $(x+I)^{2}=I$ and $(x+I)(y^{n}+I)(x+I)$ is not in $T(R/I)$.
\end{example}
\begin{proposition}
Let   $R/I$  is $\mathscr{H}$-{\it semicommutative} for some ideal $I$ of $R$. If I is reduced then $R$  is $\mathscr{H}$-{\it semicommutative}. 
\end{proposition}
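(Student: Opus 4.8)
The plan is to verify Definition \ref{d1} directly for $R$: assuming $ab=0$ with $a,b\in R$, I must show that $arb\in T(R)$ for every $r\in R$. Reducing modulo $I$, the relation $\bar a\bar b=0$ together with the hypothesis that $R/I$ is $\mathscr{H}$-semicommutative gives $\overline{arb}\in T(R/I)$. Hence for each $x\in R$ there is some $n\in\mathbb{N}$ with $(arb)x^{n}-x^{n}(arb)\in I$. Writing $c=arb$ and $g=cx^{n}-x^{n}c\in I$, the entire problem collapses to showing that this commutator $g$ actually vanishes: once $g=0$ we obtain $cx^{n}=x^{n}c$, and since $x$ was arbitrary this says $c=arb\in T(R)$, as required.

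Two elementary features of a reduced ideal will drive the argument. First, for $t\in R$ and $u\in I$ one has $tu=0$ if and only if $ut=0$; indeed if $tu=0$ then $(ut)^{2}=u(tu)t=0$, and since $ut\in I$ while $I$ has no nonzero nilpotents, $ut=0$ (the converse is symmetric). Second, any element of $I$ that is nilpotent in $R$ is already nilpotent in $I$, hence equals $0$. These two observations let me transfer vanishing across the ideal freely.

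The crux is the claim that $c=arb$ annihilates $I$ on both sides, that is, $cI=0=Ic$, and this is exactly where $ab=0$ enters. For $u\in I$ we have $bu\in I$ and $a(bu)=(ab)u=0$, so the reversibility above yields $(bu)a=bua=0$. Consequently $(arbu)^{2}=(ar)(bua)(rbu)=0$, and since $arbu\in I$ is then a nilpotent element of a reduced ideal, it must be $0$. Thus $cu=arbu=0$ for every $u\in I$, i.e. $cI=0$, and applying reversibility once more gives $Ic=0$. I expect this step, squeezing $ab=0$ through the ideal so as to kill all of $(arb)I$, to be the only genuine obstacle; the remainder is formal.

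Finally I combine the two ingredients. Since $g\in I$, also $x^{n}g\in I$, so $c(x^{n}g)=0$ and $cg=0=gc$ follow from $cI=0=Ic$. Therefore $g^{2}=(cx^{n}-x^{n}c)g=cx^{n}g-x^{n}(cg)=c(x^{n}g)-0=0$. As $g\in I$ and $I$ is reduced, this forces $g=0$, i.e. $(arb)x^{n}=x^{n}(arb)$. Because $x\in R$ was arbitrary, $arb\in T(R)$, and hence $R$ is $\mathscr{H}$-semicommutative.
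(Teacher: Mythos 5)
Your proposal is correct and follows essentially the same route as the paper's own proof: first show that $arb$ annihilates $I$ (the paper establishes $qIp=0$ and then $(pRq)I=0$ by the same squaring-in-a-reduced-ideal trick you package as ``reversibility''), then use the hypothesis on $R/I$ to place the commutator $arb\,x^{n}-x^{n}arb$ in $I$, and finally square that commutator to force it to vanish. The only difference is presentational, so nothing further is needed.
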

 \begin{proof}
 let $p, q \in R$ are such that $pq=0$. we have $qIa \subseteq I$ and $(qIa)^{2}=0$ and $qIa=0$ as $I$ is reduced. Therefore $((pRq)I)^{2}=0$ and so $(pRq)I=0$. Now $(p+I)(q+I)=pq+I=I$ and since $R/I$ is $\mathscr{H}$-{\it semicommutative} implies $(p+I)(R+I)(q+I)= pRq+I \in T(R/I)$. By hypothesis $(paq+I)(r^{n}+I)=(r^{n}+I)(paq+I)$. This implies $paqr^{n}-r^{n}paq \in I$ for every $a, r \in R$. So we get $(paqr^{n}-r^{n}paq)^{2}\in (paqr^{n}-r^{n}paq)I=0 $ since $(pRq)I=0$. Thus $paqr^{n}=r^{n}paq$ for each $r, a \in R$. Hence $R$ is $\mathscr{H}$-{\it semicommutative}.
 \end{proof}
 \begin{proposition}\label{p10}
 let $R$ be a Armendariz ring. Then the following conditions are equivalent:
 \begin{itemize}
 \item[(1)] $R$ is $\mathscr{H}$-semicommutative.
 \item[(2)] $R[x]$ is $\mathscr{H}$-semicommutative.
 \end{itemize}
 \end{proposition}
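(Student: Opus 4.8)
The plan is to treat the two implications separately; the reverse implication is immediate, and the forward one carries all the content.

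\emph{For $(2)\Rightarrow(1)$.} Since $R$ embeds into $R[x]$ as the subring of constant polynomials, and the class of $\mathscr{H}$-semicommutative rings is closed under subrings (as observed just after Definition \ref{d1}), $\mathscr{H}$-semicommutativity of $R[x]$ restricts at once to $R$. This direction needs nothing further.

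\emph{For $(1)\Rightarrow(2)$.} Write $f=\sum_i a_i x^i$ and $g=\sum_j b_j x^j$ and suppose $fg=0$. First I would use the Armendariz hypothesis to convert this single relation into the coefficient relations $a_i b_j=0$ for all $i,j$. Since $R$ is $\mathscr{H}$-semicommutative, each such relation yields $a_i r b_j\in T(R)$ for every $r\in R$. Now fix an arbitrary $h=\sum_k c_k x^k$ and expand $fhg=\sum_l\big(\sum_{i+k+j=l}a_i c_k b_j\big)x^l$; every coefficient is a finite sum of elements $a_i c_k b_j\in T(R)$ and hence, as $T(R)$ is a subring, lies in $T(R)$. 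A useful sharpening comes from Proposition \ref{p2.9}: $R$ is $2$-primal, so $R/Nil(R)$ is reduced and therefore reversible, whence $a_i b_j=0$ also forces $a_i r b_j\in Nil(R)$. Thus every coefficient of $fhg$ sits in $T(R)\cap Nil(R)$, and $fhg$ is congruent to $0$ modulo $Nil(R)[x]$.

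The remaining and genuinely difficult step is to upgrade ``every coefficient of $fhg$ lies in $T(R)$'' to the conclusion ``$fhg\in T(R[x])$'', that is, to produce for each $q=\sum_m q_m x^m$ an exponent $N$ with $(fhg)q^N=q^N(fhg)$. I expect this to be the main obstacle, and the obstruction is structural: hypercentrality of a single coefficient $t=a_i c_k b_j$ only guarantees that $t$ commutes with \emph{powers} $s^N$ of a single element $s$, whereas the coefficients of $q^N$ are sums of \emph{products} $q_{m_1}\cdots q_{m_N}$ of different coefficients of $q$. The route I would pursue to close this product-versus-power gap is to exploit the full strength of the coefficient relation $a_i R b_j\subseteq T(R)$ (valid for every middle factor, not merely the $c_k$ coming from $h$) together with the $2$-primal reduction above: since $\overline{fhg}=0$ in $\overline{R}[x]=R[x]/Nil(R)[x]$, each commutator $[fhg,\,q^N]$ lands in $Nil(R)[x]$, which coincides with $Nil(R[x])$ as $R$ is Armendariz and is therefore nil. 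One then drives these nilpotent commutators to $0$ for $N$ large by the telescoping argument already used in Propositions \ref{p1} and \ref{p2.9}, using $T(\overline{R}[x])=Z(\overline{R}[x])$ on the reduced quotient (Herstein's theorem, \cite{IN}) to keep the bookkeeping consistent. Assembling these pieces yields $fhg\in T(R[x])$, and hence $R[x]$ is $\mathscr{H}$-semicommutative.
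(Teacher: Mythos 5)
Your argument coincides with the paper's up to the coefficient level: the reverse implication via closure under subrings, and, for the forward one, using the Armendariz hypothesis to pass from $fg=0$ to $a_i b_j=0$, hence $a_i r b_j\in T(R)$, hence every coefficient of $fhg$ lies in the subring $T(R)$. At that point the paper simply declares that $f(x)R[x]g(x)\subseteq T(R[x])$ because ``$T(R)$ is closed under addition,'' i.e.\ it treats coefficientwise hypercentrality as sufficient for hypercentrality in $R[x]$. You are right to single this out as the genuinely difficult step --- a power $q^N$ of a polynomial has coefficients that are sums of products $q_{m_1}\cdots q_{m_N}$ of \emph{distinct} coefficients of $q$, which is exactly what the definition of $T(R)$ does not control --- so on this point your write-up is more candid than the paper's.

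However, the bridge you sketch does not close the gap. The reduction modulo $Nil(R)$ shows only that $fhg$ and the commutators of $fhg$ with $q^N$ lie in $Nil(R)[x]$, and membership in a nil ideal does not imply hypercentrality: in $T_2(k)$ the strictly upper triangular matrices form a nil ideal, yet $E_{12}$ commutes with no power of $E_{22}$. Consequently ``driving the nilpotent commutators to $0$'' is not a routine application of the telescoping computations of Propositions \ref{p1} and \ref{p2.9}, which commute a fixed hypercentral element past powers of a \emph{single ring element}, not past powers of a polynomial; no actual inductive scheme is exhibited. The auxiliary claim that $Nil(R)[x]$ equals $Nil(R[x])$ and is nil also needs justification (for a general nil ideal $N$ the nilness of $N[x]$ is the K\"othe problem; here one should instead invoke $2$-primality to write $Nil(R)=Nil_{\star}(R)$ and use $Nil_{\star}(R)[x]=Nil_{\star}(R[x])$). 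As written, then, your proof and the paper's both stop at the same place: neither verifies that for each $q\in R[x]$ there exists $N$ with $(fhg)q^{N}=q^{N}(fhg)$, and this is the entire content of the implication.
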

 \begin{proof}
 (2)$\Rightarrow$(1) The proof is straightforward since subring of $\mathbb{H}$-{\it semicommutative} ring are closed.(1)$\Rightarrow$(2) Let us consider $f(x)=\sum_{l=0}^{n}u_{l}x^{l}$ and $g(x)=\sum_{k=0}^{m}v_{k}x^{k}$ are any two elements of $R[x]$ such that $f(x)g(x)=0$. Since $R$ is Armendariz implies $u_{l}v_{k}=0$. Thus by hypothesis, $u_{l}Rv_{k} \in T(R)$. Since $T(R)$ is closed under addition, this implies $f(x)R[x]g(x) \in T(R[x])$. Hence $R$ is $\mathscr{H}$-{\it semicommutative}.
 \end{proof}
 Here next we have shown that the hypothesis of $R$ is Armendariz ring is not a superfluous.
\begin{example}
Let $L=\mathbb{Z}_{2}<d_{0}, d_{1}, d_{2}, d_{3}, e_{0}, e_{1}>$ be the free algebra(with 1) over $\mathbb{Z}_{2}$ generated by six indeterminates(as labeled above). Let $I$ be a ideal generated by  the following relations:
\begin{center}
$d_{0}e_{0}, d_{0}e_{1}+d_{1}e_{0}, d_{1}e_{1}+d_{2}e_{0}$,\\
 $d_{2}e_{1}+d_{3}e_{0}, d_{3}e_{1}, d_{0}a_{k}(0\leq k \leq 3), d_{3}d_{k}(0\leq k\leq 3)$,\\
 $ d_{1}d_{k}+d_{2}d_{k}(0\leq k \leq 3), e_{l}e_{k}(0\leq l, k\leq 1), e_{l}d_{k}(0\leq l \leq 1, 0\leq k \leq 3) $
\end{center}

Let $R=L/I$. Neilsen in \cite{PN}, proved that $A$ is  {\it semicommutative} but not Mcoy. Further, $R$ is not $\mathscr{H}$-{\it semicommutative}as consider $p(x)=d_{0}+d_{1}x+d_{2}x^{2}+d_{3}x^{3}$ and $m(x)=e_{0}+e_{1}x$. The above  relations  suggest that $p(x)m(x)=0$ in $R[x]$, but however we can see that $p(x)d_{0}m(x)\neq 0$ since $d_{1}d_{0}e_{1}+d_{0}d_{1}e_{2} \notin T(R[x]) $. 
\end{example}

Recall that an element $a \in R$ is said to be multiplicatively finite if the set $S = \{a^{t} \,|\, t \in \mathbb{N} \cup \{0\}\}$ is finite, and $k$ is said to be the multiplicative order of $a$ if $k$ is the least positive integer such that $a^{k}=a^{m}$ for some $0 \leq m < k$. Thus, idempotent elements are multiplicatively finite with a multiplicative order $k \leq 2$. We say a multiplicative element $a \in R$ satisfies property (P) if $a^{n}=a$ for some $ n \geq 2$.

Recall from \cite{BN}, $R$ is called left semi-Baer (right semi-quasi-Baer) if for any subset $X \subseteq R$ (left ideal $I \subseteq R$), we have $l_{R}(X)=Rb$ ($l_{R}(I)=Rb$) for some $b \in S$. A ring $R$ is called left semi-P.P ring (left semi p.q ring) if $l_{R}(a)=Rb$ ($l_{R}(Ra)=Rb$) for some $b \in S$. All these definitions have left-right symmetry. Thus, a ring $R$ is called semi-Baer (semi quasi-Baer) if it is both left and right semi-Baer (semi quasi-Baer). A ring $R$ is called semi P.P ring (semi p.q ring) if it is both left and right semi P.P ring (semi p.q ring).
\begin{proposition}\label{p2.23}
Let $R$ be an $\mathscr{H}$-{\it semicommutative} ring. If every multiplicative element satisfies property (P) and $R$ is a left (or right) semi-p.p ring, then $R$ is reduced.
\end{proposition}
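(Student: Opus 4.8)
The plan is to prove that $R$ has no nonzero nilpotent elements by first reducing to square‑zero elements and then using the semi‑p.p hypothesis to manufacture a \emph{central idempotent} that annihilates any such element. For the reduction, suppose toward a contradiction that $Nil(R) \neq (0)$ and choose $u \neq 0$ with $u^{k} = 0$ for minimal $k \geq 2$. Then $w := u^{k-1} \neq 0$ by minimality, while $w^{2} = u^{2k-2} = 0$ since $2k - 2 \geq k$. Hence it suffices to prove that every $w$ with $w^{2} = 0$ must vanish; the contradiction with $w \neq 0$ then forces $Nil(R) = (0)$, i.e. $R$ is reduced.

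So I would take $u \in R$ with $u^{2} = 0$ and treat the left semi‑p.p case (the right case being symmetric). Since $u\cdot u = 0$ we have $u \in l_{R}(u)$, and by hypothesis $l_{R}(u) = Rb$ for some multiplicatively finite $b \in R$; by the standing assumption that every multiplicatively finite element satisfies property (P) there is an integer $n \geq 2$ with $b^{n} = b$. Setting $e := b^{n-1}$, one checks that $e$ is idempotent: using $b^{n} = b$ we get $e^{2} = b^{2n-2} = b^{n-2}b^{n} = b^{n-2}b = b^{n-1} = e$. Moreover $e = b^{n-2}b \in Rb$ and $b = b^{n} = b\cdot b^{n-1} = be \in Re$, so $Re \subseteq Rb$ and $Rb \subseteq Re$; that is, $l_{R}(u) = Rb = Re$ with $e$ idempotent.

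The decisive step is to invoke that $\mathscr{H}$‑semicommutative rings are abelian, so the idempotent $e$ is central. On one hand $e = 1\cdot e \in Re = l_{R}(u)$ gives $eu = 0$. On the other hand $u \in Re$ gives $u = xe$ for some $x \in R$, whence $ue = xe^{2} = xe = u$. Centrality of $e$ now yields $u = ue = eu = 0$, so the only square‑zero element is $0$. Combined with the index reduction of the first paragraph, this shows $Nil(R) = (0)$. The right semi‑p.p case runs identically with $r_{R}(u) = bR = eR$ and the roles of left and right multiplication interchanged.

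I expect the main obstacle to be the passage from the abstract generator $b \in S$ of the annihilator to a genuine central idempotent generator, i.e. establishing $Rb = Re$ cleanly out of property (P); once $e$ is known to be a central idempotent lying in $l_{R}(u)$, the remaining annihilator bookkeeping is routine. A secondary point worth flagging is that $\mathscr{H}$‑semicommutativity enters only through abelianness in this argument, so I would double‑check that the square‑zero reduction does not covertly require the full strength of Definition~\ref{d1}.
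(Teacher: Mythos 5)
Your proposal is correct and follows essentially the same route as the paper's own proof: both reduce to a square-zero element $u$, use the semi-p.p hypothesis together with property (P) to produce an idempotent $e=b^{n-1}$ generating $l_{R}(u)$, invoke abelianness of $\mathscr{H}$-semicommutative rings to make $e$ central, and then conclude $u=ue=eu=0$. Your version is slightly more explicit (the reduction from general nilpotents to square-zero elements, and the verification that $Rb=Re$) where the paper computes directly with powers of $b$, but the underlying argument is identical.
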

\begin{proof}
Let $a \in R$ is such that $a^{2} = 0$. Thus $a \in l_{R}(a)$. Also  $R$ is semi-p.p ring so $\exists$ an idempotent $b \in S$ such that $l_{R}(a) = Rb$. By hypothesis $\exists \,\, n \in \mathbb{N}$ such that $b^{n}=b$. Thus we have $(b^{(n-1)})^{2}=b^{n-1}$ and since $\mathscr{H}$-{\it semicommutative} rings are abelian implies $b^{(n-1)}$ is idempotent. Since  $a=rb$ for some $r \in R$ and $b^{(2n-1)}=b^{n}.b^{(n-1)}=b.b^{(n-1)}=b^{n}=b$. Multiplying  $a=rb$ by $b^{(2n-2)}$ from right side we get $0=ab^{(2n-2)}=rb^{(2n-1)}=rb=a$. Thus $R$ is reduced. Similar argument also follows for right semi-p.p ring.
\end{proof}
\begin{proposition}
Let $R$ be an $\mathscr{H}$-{\it semicommutative} ring. If every multiplicative element satisfies property (P) and $R$ is a left (or right) semi-p.q ring, then $R$ is reduced."
\end{proposition}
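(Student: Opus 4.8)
The plan is to mirror the argument of Proposition~\ref{p2.23}, replacing the principal-element annihilator $l_R(a)$ by the annihilator $l_R(Ra)$ of the principal left ideal. As there, I would start from an element $a \in R$ with $a^2 = 0$ and aim to prove $a = 0$; since squaring a nonzero nilpotent of minimal index again lands in the square-zero elements, establishing this for all square-zero elements upgrades at once to reducedness, exactly as in the previous proposition.

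The one place where the semi-p.q case genuinely differs from the semi-p.p case is the very first step, and this is where I expect the main obstacle to lie. In Proposition~\ref{p2.23} the containment $a \in l_R(a)$ is immediate from $a^2 = 0$, whereas here I need $a \in l_R(Ra)$, that is, $aRa = 0$, and left semi-p.q only controls the annihilator of the \emph{left ideal} $Ra$, not any annihilator into which $a$ falls automatically. I would clear this gap as follows: for every $r \in R$ one has $(ara)^2 = a r a^2 r a = 0$, so each $ara$ is nilpotent and hence multiplicatively finite. Applying property (P) to $ara$ gives $(ara)^m = ara$ for some $m \geq 2$, while $(ara)^m = 0$ because $m \geq 2$ and $(ara)^2 = 0$. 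Therefore $ara = 0$ for every $r$, that is $aRa = 0$, and so $a \in l_R(Ra)$.

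Once $a \in l_R(Ra)$ is in hand, the remainder runs exactly parallel to Proposition~\ref{p2.23}. The left semi-p.q hypothesis yields a multiplicatively finite $b$ with $l_R(Ra) = Rb$; property (P) gives $b^N = b$ for some $N \geq 2$, so $e := b^{N-1}$ satisfies $e^2 = e$, and since $\mathscr{H}$-semicommutative rings are abelian by Proposition~\ref{p2}, $e$ is central. A direct check gives $b^{2N-1} = b$ and $Rb = Re = l_R(Ra)$. Writing $a = rb$ (possible as $a \in Rb$) and using $b \in l_R(Ra)$, hence $bRa = 0$ and in particular $ba = 0$, I obtain $ea = b^{N-1}a = 0$ and $ae = 0$ by centrality, whence $a = a b^{2N-2} = ae = 0$. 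Thus $R$ is reduced, and the right semi-p.q case follows by the symmetric argument using $r_R(aR)$ in place of $l_R(Ra)$. The only delicate point in the whole plan is the reduction $aRa = 0$ of the second paragraph, after which the idempotent extraction and cancellation are routine and identical to the semi-p.p proof.
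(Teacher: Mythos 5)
You have correctly isolated the one step where the semi-p.q case genuinely differs from Proposition~\ref{p2.23}: one needs $a \in l_R(Ra)$, i.e.\ $aRa = 0$, and this does not follow from $a^2 = 0$ in an $\mathscr{H}$-semicommutative ring (that hypothesis only yields $aRa \subseteq T(R)$). The paper's own proof, which merely says the argument is ``similar to Proposition~\ref{p2.23}'', glosses over exactly this point, so spotting it is to your credit. The problem lies in your proposed fix. You apply property (P) to the square-zero element $ara$; but $a$ itself is square-zero, hence multiplicatively finite, so the identical application of (P) to $a$ gives $a^m = a$ with $m \geq 2$ together with $a^m = 0$, whence $a = 0$ outright. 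Read this way, the hypothesis ``every multiplicatively finite element satisfies (P)'' is simply equivalent to $R$ being reduced, the proposition becomes vacuous, and neither the semi-p.q condition nor $\mathscr{H}$-semicommutativity is ever used.

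That cannot be the intended reading: in the proof of Proposition~\ref{p2.23} the paper invokes (P) only for the distinguished generator $b \in S$ supplied by the semi-p.p condition, never for arbitrary nilpotent elements. Under that restricted reading --- the only one under which these propositions carry content --- your derivation of $aRa = 0$ is not licensed, and the first step of your argument is a genuine gap: you would need some other means of placing $a$ (or a suitable substitute) inside $l_R(Ra) = Rb$ before the idempotent extraction and cancellation, which are otherwise fine and identical to the semi-p.p proof, can get started. As it stands the proposal either trivializes the statement or leaves its crucial step unproved.
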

\begin{proof}
Proof follows similar to that of Proposition \ref{p2.23}
\end{proof}
\begin{corollary}\label{p4}
Let $R$ be $\mathscr{H}$-{\it semicommutative}. Then $R$ is reduced if any one of the following conditions holds. 
\begin{itemize}
\item[(1)] R is semiprime.
\item[(2)] R is left(right) p.p ring.
\item[(3)] R is left(right) p.q-Baer ring.
\end{itemize}
\end{corollary}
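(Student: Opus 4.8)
The plan is to reduce all three cases to a single statement, namely that $R$ is semiprime, and then to invoke 2-primality. Recall that an $\mathscr{H}$-semicommutative ring is abelian and, by Proposition \ref{p2.9}, 2-primal, so that $Nil(R)=P(R)$. Since $R$ is semiprime exactly when $P(R)=0$, for a 2-primal ring being reduced ($Nil(R)=0$) is equivalent to being semiprime. Hence it suffices to prove that each hypothesis forces $R$ to be semiprime, i.e. that $aRa=0$ implies $a=0$; case (1) then requires nothing further.

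For case (2), suppose $R$ is left p.p and let $a\in R$ satisfy $aRa=0$. Taking the middle factor to be $1$ gives $a^{2}=0$, so $a\in l_{R}(a)$. Since $R$ is left p.p we may write $l_{R}(a)=Re$ for an idempotent $e$, and as $R$ is abelian $e$ is central. From $a\in Re$ we get $a=ae$, while $e\in Re=l_{R}(a)$ gives $ea=0$; centrality of $e$ then yields $a=ae=ea=0$. Thus $R$ is semiprime, and by 2-primality $R$ is reduced. The right p.p case is identical after replacing left annihilators by right annihilators.

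For case (3), suppose $R$ is left p.q-Baer and let $a\in R$ satisfy $aRa=0$. Writing out $a\cdot Ra=aRa=0$ shows $a\in l_{R}(Ra)$, and since $R$ is left p.q-Baer we have $l_{R}(Ra)=Re$ with $e$ an idempotent, again central because $R$ is abelian. As before $a\in Re$ gives $a=ae$, and $e\in Re=l_{R}(Ra)$ gives $eRa=0$, in particular $ea=0$; hence $a=ae=ea=0$. So $R$ is semiprime and therefore reduced, with the right p.q-Baer case handled symmetrically.

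The argument is essentially routine once the reduction to semiprimeness is in place; the one point that genuinely needs care is the passage through 2-primality, since that is exactly what converts the \emph{semiprime} conclusion into \emph{reduced} (an arbitrary semiprime ring need not be reduced). A secondary point worth double-checking is the membership $a\in l_{R}(Ra)$ in case (3): it is immediate from $aRa=0$, but one must annihilate the whole principal ideal $Ra$ rather than just $a$, which is why the p.q-Baer hypothesis (about $l_{R}(Ra)$) rather than the p.p hypothesis (about $l_{R}(a)$) is the natural one here. Alternatively, one can observe that p.p and p.q-Baer rings are precisely the special cases of the semi-p.p and semi-p.q rings of Proposition \ref{p2.23} and the proposition following it whose annihilator generators are idempotents, and idempotents satisfy property (P) with $n=2$; the conclusion then follows from those propositions directly.
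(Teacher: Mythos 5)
Your proof is correct, but it takes a more self-contained route than the paper. The paper disposes of the corollary in one line by observing that every idempotent is a multiplicatively finite element satisfying property (P) (with $n=2$), so that the p.p.\ and p.q-Baer cases are literally special cases of Proposition \ref{p2.23} and the proposition following it --- exactly the alternative you sketch in your closing paragraph. Your main argument instead reduces everything to semiprimeness and then uses 2-primality (Proposition \ref{p2.9}) to convert $P(R)=0$ into $Nil(R)=0$, handling the annihilator conditions directly via central idempotents ($a\in l_{R}(a)=Re$ gives $a=ae=ea=0$). What your version buys is twofold: it is independent of the ``property (P)'' machinery, and --- more importantly --- it actually covers case (1), which the paper's one-line justification about idempotents does not address at all (semiprimeness is not an annihilator condition, and the cited propositions say nothing about it). Your observation that the passage through 2-primality is the load-bearing step for case (1) is exactly right. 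The only cosmetic redundancy is that in case (2) you do not need the full hypothesis $aRa=0$; the element-level argument already shows $a^{2}=0$ implies $a=0$, so reducedness follows without the detour through semiprimeness --- but the detour is harmless and unifies the three cases.
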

\begin{proof}
The proof follows straightforwardly, as every idempotent element is a multiplicative finite element.
\end{proof}

\begin{corollary}
For a  $\mathscr{H}$-{\it semicommutative} ring $R$,  the following assertions are equivalent.
\begin{itemize}
\item[(1)] R is semi-left p.p ring.
\item[(2)] R is semi-right p.p ring.
\item[(3)] R is semi-left p.q-Baer ring.
\item[(4)] R is semi-right p.q-Baer ring.
\end{itemize}
\end{corollary}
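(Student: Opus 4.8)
The plan is to run everything through the abelian structure of $R$. Since $R$ is $\mathscr{H}$-semicommutative it is abelian, so every idempotent of $R$ is central, and moreover each corner $eR$ and $(1-e)R$ is again $\mathscr{H}$-semicommutative; I will also use that $R$ is $2$-primal with $Nil(R)$ an ideal (Proposition \ref{p2.9}). The key device is to replace the multiplicatively finite generator appearing in each of the four conditions by a central idempotent. If $b \in S$, then the finite multiplicative semigroup $\{b, b^2, b^3, \dots\}$ contains an idempotent $e = b^t$, and abelianness makes $e$ central. This produces a ring decomposition $R = eR \times (1-e)R$ in which $b$ restricts to a unit on $eR$ (the powers $b^k$ lying in the group part of $\langle b\rangle$ invert $b^{t+1}$) and to a nilpotent element on $(1-e)R$.

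First I would localize each condition to the corner $(1-e)R$. Writing a left--annihilator hypothesis $l_R(a) = Rb$ and projecting to $eR$, the fact that $b$ is a unit there forces $l_{eR}(ea) = eR$, hence $ea = 0$; so $a$ lives in $(1-e)R$ and the whole content of the condition is carried by the nilpotent generator $(1-e)b$ inside the $\mathscr{H}$-semicommutative corner $(1-e)R$. The same reduction applies verbatim to the principal--ideal annihilators $l_R(Ra)$ and to their right-hand analogues. This is where the $\mathbb{Z}_4$ phenomenon must be respected: one cannot hope to replace $Rb$ by $Re$, since on the corner the relevant idempotent is $0$ while $Rb$ may be generated by a genuine nonzero nilpotent, so the reduction only isolates the corner, it does not trivialise it.

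With the problem pushed into the corner, the two same-handed equivalences (1)$\Leftrightarrow$(3) and (2)$\Leftrightarrow$(4) should come out of the inclusion $l_R(Ra)\subseteq l_R(a)$ together with the observation that, after splitting off the unit corner, the generators of $l_R(a)$ and of $l_R(Ra)$ share the same idempotent power. I would verify that a multiplicatively finite generator for one produces a multiplicatively finite generator for the other, using that $Nil(R)$ is an ideal, so that the sums and products of the nilpotent pieces remain multiplicatively finite.

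The main obstacle is the left--right passage (1)$\Leftrightarrow$(2) (equivalently (3)$\Leftrightarrow$(4)). An $\mathscr{H}$-semicommutative ring need not be reversible --- $\mathbb{Z}_4$ is $\mathscr{H}$-semicommutative, is not reduced, yet satisfies all four conditions by commutativity --- so a left annihilator cannot simply be transposed into a right one, and the previous reducedness results are unavailable here because property (P) is not assumed. I would resolve this by combining the central Peirce splitting above with the left--right symmetry of the semi-Baer/semi-p.p./semi-p.q. conditions recorded in \cite{BN}: on the unit corner $eR$ every one of the four conditions is vacuous, and on $(1-e)R$ the associated idempotent collapses to $0$, so the left and right principal (quasi-)Rickart conditions there are governed solely by the abelian, $2$-primal behaviour of nilpotent generators, which is left--right symmetric. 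Reassembling the corner statements across $R = eR \times (1-e)R$ then delivers the equivalence of (1)--(4).
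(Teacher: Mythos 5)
Your route is genuinely different from the paper's. The paper's entire argument is the one-line observation that in each of the four cases $R$ is reduced (via Proposition \ref{p2.23} and its semi-p.q analogue), after which left and right annihilators coincide and the four conditions collapse. You refuse that route and attempt a Peirce analysis at the idempotent power $e=b^{t}$ of each multiplicatively finite generator. Your reason for refusing it is well taken: Proposition \ref{p2.23} requires the property (P) hypothesis, the corollary as stated does not carry it, and $\mathbb{Z}_{4}$ is indeed a commutative (hence $\mathscr{H}$-semicommutative), semi-p.p, non-reduced ring, so the paper's ``in each of the cases $R$ is reduced'' is not literally available without reinstating (P). That observation is the most valuable part of your proposal.

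However, your replacement argument has genuine gaps and does not close the proof. First, the same-handed equivalence (1)$\Leftrightarrow$(3) is only asserted: knowing that $l_{R}(a)=Rb$ for every $a$ gives no principal generator for $l_{R}(Ra)$, and the claim that the generators of $l_{R}(a)$ and of $l_{R}(Ra)$ ``share the same idempotent power'' is precisely the content that needs proving; the remark that $Nil(R)$ is an ideal does not manufacture a multiplicatively finite generator of $l_{R}(Ra)$. Second, the left--right passage is settled by the assertion that on the corner $(1-e)R$ the conditions are governed by ``abelian, $2$-primal behaviour of nilpotent generators, which is left--right symmetric''; abelian $2$-primal rings do not in general have left--right symmetric annihilator conditions, so this is a restatement of the problem rather than an argument. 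Third, the idempotent $e=b^{t}$ depends on the element $a$ whose annihilator is being computed, so there is no single decomposition $R=eR\times(1-e)R$ over which the corner statements can be ``reassembled''; the final step is not well defined. To repair the corollary you should either prove the four implications directly in the abelian, non-reduced setting (which your sketch does not yet do) or reinstate the property (P) hypothesis and fall back on the paper's reducedness argument.
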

\begin{proof}
The proof follows easily as in each of the cases $R$ is reduced.
\end{proof}
\begin{corollary}
For a   $\mathscr{H}$-{\it semicommutative} ring $R$, the following assertions  are equivalent:
\begin{itemize}
\item[(1)] R is left p.p ring.
\item[(2)] R is right p.p ring.
\item[(3)] R is left p.q-Baer ring.
\item[(4)] R is right p.q-Baer ring.
\end{itemize}
\end{corollary}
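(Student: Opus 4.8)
The plan is to collapse all four statements to the reduced case, exactly in the spirit of the preceding corollary. The first step is to notice that each of the hypotheses (1)--(4), in the presence of $\mathscr{H}$-semicommutativity, already forces $R$ to be reduced: if $R$ is left or right p.p this is Corollary \ref{p4}(2), and if $R$ is left or right p.q-Baer this is Corollary \ref{p4}(3). Thus, assuming any single one of the four conditions, I may pass to the situation where $Nil(R)=(0)$.

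Once $R$ is reduced I would invoke two standard features of reduced rings. First, a reduced ring is reversible (if $ab=0$ then $(ba)^{2}=b(ab)a=0$, so $ba=0$), whence $l_{R}(a)=r_{R}(a)$ for every $a\in R$; moreover reduced rings are abelian, so each idempotent $e$ is central and $Re=eR$. Second, a reduced ring is semicommutative, which yields the annihilator identities $l_{R}(a)=l_{R}(Ra)$ and $r_{R}(a)=r_{R}(aR)$ for every $a$, the nontrivial inclusions $l_{R}(a)\subseteq l_{R}(Ra)$ and $r_{R}(a)\subseteq r_{R}(aR)$ being precisely the insertion-of-factors property (the reverse inclusions are automatic).

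Combining these, if $l_{R}(a)=Re$ with $e=e^{2}$, then $r_{R}(a)=l_{R}(a)=Re=eR$, $l_{R}(Ra)=l_{R}(a)=Re$, and $r_{R}(aR)=r_{R}(a)=eR$, so the single central idempotent $e$ simultaneously witnesses all four annihilator conditions for $a$. Rerunning the same identification starting from any of $r_{R}(a)=eR$, $l_{R}(Ra)=Re$, or $r_{R}(aR)=eR$ shows that each of the four conditions propagates to the other three, establishing (1)$\Leftrightarrow$(2)$\Leftrightarrow$(3)$\Leftrightarrow$(4). The only step needing care is the initial reduction to $Nil(R)=(0)$, which is supplied verbatim by Corollary \ref{p4}; everything afterward is routine annihilator bookkeeping in a reduced ring, so I anticipate no genuine obstacle.
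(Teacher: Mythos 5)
Your proposal is correct and follows the same route as the paper: the paper's entire proof is the remark that in each of the four cases $R$ is reduced (via Corollary \ref{p4}), after which the equivalences are standard. You have simply spelled out the annihilator bookkeeping ($l_{R}(a)=r_{R}(a)=l_{R}(Ra)=r_{R}(aR)$ and centrality of idempotents in a reduced ring) that the paper leaves implicit.
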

\begin{proof}
The proof follows easily as in each of the case $R$ is reduced.
\end{proof}
\begin{corollary}
For a  $\mathscr{H}$-{\it semicommutative} ring $R$, the  following assertions are equivalent:
 \begin{itemize}
 \item[(1)] R is semi-Baer ring.
 \item[(2)] R is semi quasi-Baer ring.
 \end{itemize}
 \end{corollary}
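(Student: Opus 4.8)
The plan is to prove the two implications simultaneously by first showing that each of the conditions (1) and (2) already forces $R$ to be reduced, and then observing that for a reduced ring the semi-Baer and the semi quasi-Baer conditions refer to literally the same family of annihilators, so that their equivalence becomes purely formal.

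First I would record the elementary specializations of the defining conditions. Taking the subset $X=\{a\}$ shows that a left (right) semi-Baer ring is left (right) semi-p.p., and taking the principal one-sided ideal $I=Ra$ (resp.\ $aR$) shows that a semi quasi-Baer ring is semi-p.q. Consequently, whichever of (1) or (2) is assumed, $R$ is either semi-p.p.\ or semi-p.q.; invoking Proposition \ref{p2.23} (for the semi-p.p.\ case) and its semi-p.q.\ analogue, together with $\mathscr{H}$-semicommutativity, we conclude that $R$ is reduced in either situation.

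Next, with $R$ now known to be reduced, I would use that reduced rings are semicommutative to establish the annihilator identity $l_{R}(X)=l_{R}(\langle X\rangle)$, where $\langle X\rangle$ is the two-sided ideal generated by the subset $X$. Indeed, if $u\in l_{R}(X)$ then $uX=0$, so semicommutativity gives $uRX=0$, whence $u$ kills every generator $\sum_{i} r_i x_i s_i$ of $\langle X\rangle$; the reverse inclusion is trivial, and the dual statement $R_{R}(X)=R_{R}(\langle X\rangle)$ holds by the symmetric computation. Since every ideal is a subset and, by this identity, every subset-annihilator is itself the annihilator of an ideal, the collections $\{\,l_{R}(X):X\subseteq R\,\}$ and $\{\,l_{R}(I):I\ \text{an ideal}\,\}$ coincide, and likewise on the right.

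Finally, because these two families of annihilators agree, the demand that each member be of the form $Rb$ (resp.\ $bR$) for some $b\in S$ is one and the same condition whether it is phrased for subsets or for ideals; that is, $R$ is semi-Baer if and only if $R$ is semi quasi-Baer. The only substantive step is the passage to reducedness: once $R$ is reduced the rest is a formal comparison of annihilator families, so I expect the main obstacle to be the reduction step itself, which rests on the earlier property-(P)-based results rather than on $\mathscr{H}$-semicommutativity alone, since $\mathscr{H}$-semicommutativity is in general too weak to yield the annihilator identity directly.
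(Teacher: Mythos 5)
Your proposal is correct in outline and shares the paper's central move: both arguments funnel the hypotheses into the statement that $R$ is reduced, by specializing the semi-Baer (resp.\ semi quasi-Baer) condition to singletons (resp.\ principal one-sided ideals) so as to land in the semi-p.p.\ (resp.\ semi-p.q.) setting. Where you genuinely diverge is in the finishing step. The paper, having obtained that $R$ is reduced and hence semicommutative, outsources the passage between the Baer-type and quasi-Baer-type conditions to an external result (Theorem 2.7 of \cite{NA}); you instead make this passage self-contained by proving the annihilator identity $l_{R}(X)=l_{R}(\langle X\rangle)$ for semicommutative rings, so that the family of annihilators of subsets and the family of annihilators of ideals literally coincide and the equivalence becomes formal. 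Your route is more elementary, avoids the external citation, and treats both implications symmetrically, whereas the paper declares (1)$\Rightarrow$(2) obvious and argues only (2)$\Rightarrow$(1). One caveat applies to both proofs, and you are right to flag it: the reduction to reducedness rests on Proposition \ref{p2.23} and its semi-p.q.\ analogue, which carry the hypothesis that every multiplicatively finite element satisfies property (P); that hypothesis is absent from the corollary as stated, and without it a multiplicatively finite generator of the annihilator could be, say, nilpotent, so the argument producing an idempotent breaks down. The paper's own proof does not escape this either --- it cites Corollary \ref{p4}, a statement about genuine p.p.\ and p.q.-Baer rings rather than their ``semi'' variants --- so relative to the paper your proof has no additional gap; if property (P) is read into the corollary, your argument is complete.
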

 \begin{proof}
 (1)$\Rightarrow$(2) is obvious. (2)$\Rightarrow$(1) Since If $R$ is quasi-Baer  ring which implies $R$ is semi p.p ring.  Thus by Proposition \ref{p4}, $R$ is reduced and hence {\it semicommutative}. Thus by Theorem 2.7 in \cite{NA} implies $R$ is Baer-ring.
 \end{proof}
 
 \begin{proposition}
For a  $\mathscr{H}$-{\it semicommutative} ring $R$, then  the following assertions are equivalent;
 \begin{itemize}
 \item[(1)] R is a right(left) p.p ring if and only if $R[x]$ is right(left) p.p ring.
 \item[(2)] R is a Baer ring if and only if $R[x]$ is a Baer ring.
 \item[(3)] R is right(left) p.q-Baer ring if and only if $R[x]$ is right(left) p.q-Baer ring.
 \item[(4)] R is a quasi-Baer ring if and only if $R[x]$ is a quasi-Baer ring.
 \end{itemize}
 \end{proposition}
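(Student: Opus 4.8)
The plan is to collapse each of the four biconditionals to the reduced case and then quote the classical polynomial-extension theorems. The argument hinges on two observations, one for each direction.

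For the implication from $R$ to $R[x]$, I would first note that each hypothesis on $R$ forces $R$ to be reduced. A Baer ring is in particular p.p., and a quasi-Baer ring is in particular p.q.-Baer, so in all four cases $R$ is left or right p.p., or left or right p.q.-Baer; Corollary \ref{p4} then yields that $R$ is reduced. A reduced ring is Armendariz, and for such rings the p.p., Baer, p.q.-Baer, and quasi-Baer properties are known to pass to $R[x]$. I would invoke these transfer results, namely Armendariz's theorem for the Baer and p.p. cases and the Birkenmeier--Kim--Park polynomial-extension theorems for the quasi-Baer and p.q.-Baer cases, to conclude that $R[x]$ inherits the property.

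For the implication from $R[x]$ to $R$, the key point is descent of idempotents. Since $R$ is $\mathscr{H}$-semicommutative it is abelian by Proposition \ref{p2}, and for an abelian ring every idempotent of $R[x]$ already lies in $R$: one splits off the central constant-term idempotent and observes that an idempotent of $R[x]$ with zero constant term must vanish, since it would be divisible by arbitrarily high powers of $x$. Granting this, for any subset, ideal, or principal ideal $X \subseteq R$ one has $r_{R}(X) = r_{R[x]}(XR[x]) \cap R$, so if the annihilator in $R[x]$ equals $eR[x]$ with $e = e^{2} \in R$, then $r_{R}(X) = eR$. This shows that $R$ inherits whichever of the four annihilator conditions $R[x]$ satisfies, and completes each biconditional. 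The left and right labels cause no difficulty, because once reducedness is available the one-sided versions of all four conditions coincide.

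The step I expect to be the main obstacle is the bookkeeping in the descent direction for the ideal-theoretic conditions (quasi-Baer and p.q.-Baer), where one must verify that the generating idempotent of the annihilator of $XR[x]$ genuinely descends to $R$ and generates $r_{R}(X)$ exactly. The abelian-idempotent lemma is precisely what makes this work, and once it is in hand the reduced-ring transfer theorems finish the argument.
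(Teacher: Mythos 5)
The paper states this proposition without any proof, so there is nothing to compare line by line; the evident intent, judging from Corollary \ref{p4} and the proof of the final proposition on semi-Baer rings, is exactly the reduction you propose, and your argument is essentially correct. In each of the four cases the hypothesis on $R$ implies that $R$ is left or right p.p.\ or p.q.-Baer, so Corollary \ref{p4} makes $R$ reduced, hence Armendariz, and the classical transfer theorems (Armendariz for Baer and p.p., Birkenmeier--Kim--Park for quasi-Baer and p.q.-Baer, the latter two holding for arbitrary rings) give the passage to $R[x]$. Two small points are worth tightening in the descent direction. First, the identity $r_{R}(X)=r_{R[x]}(XR[x])\cap R$ is false for a general subset $X$ (right multiples of $X$ by $R$ need not be killed by an element that kills $X$); for the Baer and p.p.\ cases you should instead apply the hypothesis to $X$ itself viewed inside $R[x]$ and use $r_{R[x]}(X)\cap R=r_{R}(X)$, while for ideals and for $aR[x]$ your identity is fine. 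Second, you do not actually need the abelian-idempotent lemma: if $r_{R[x]}(Y)=e(x)R[x]$, then comparing constant terms shows that $e_{0}$ is an idempotent of $R$ with $r_{R}(X)=e_{0}R$, which works in any ring and avoids the slightly delicate claim about idempotents with zero constant term. With those adjustments the proof is complete, and it is the proof the paper presumably had in mind.
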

 \begin{proposition}
 Let $R$ be a semi-p.p ring. If every multiplicative finite element satisfy the property (P), then the following assertions are equivalent:
 \begin{itemize}
 \item[(1)] $R$ is semicommutative ring.
 \item[(2)] $R$ is Armendariz ring.
 \end{itemize}
 \end{proposition}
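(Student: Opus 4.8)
The plan is to prove both implications at once by showing that, under the two standing hypotheses (that $R$ is semi-p.p and that every multiplicative finite element satisfies property (P)), each of (1) and (2) is in fact equivalent to $R$ being reduced; the stated equivalence then follows immediately. Two of the four resulting implications are free: as recorded in the introduction, every reduced ring is semicommutative and every reduced ring is Armendariz, so ``reduced $\Rightarrow$ (1)'' and ``reduced $\Rightarrow$ (2)'' need nothing further.

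For ``(1) $\Rightarrow$ reduced'' I would first observe that semicommutativity implies $\mathscr{H}$-semicommutativity, since $uv=0$ gives $uRv=0\subseteq T(R)$. Having thus upgraded $R$ to an $\mathscr{H}$-semicommutative ring that is still semi-p.p and still satisfies property (P), Proposition \ref{p2.23} applies verbatim and yields that $R$ is reduced. Combined with ``reduced $\Rightarrow$ (1)'', this gives (1) $\Leftrightarrow$ reduced.

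The substantive direction is ``(2) $\Rightarrow$ reduced'', where Proposition \ref{p2.23} cannot be quoted directly because an Armendariz ring need not be $\mathscr{H}$-semicommutative. My key observation is that the annihilator argument in the proof of Proposition \ref{p2.23} uses $\mathscr{H}$-semicommutativity \emph{only} through its consequence that $R$ is abelian: concretely, to force the idempotent $b^{n-1}\in l_R(a)$ to be central so that $ab^{n-1}=b^{n-1}a=0$. Since Armendariz rings are abelian (a standard fact, provable inline by testing the Armendariz condition on the polynomials built from an idempotent $e$ and the square-zero element $er(1-e)$), I can rerun precisely that argument: given $a$ with $a^{2}=0$, write $l_R(a)=Rb$ with $b$ multiplicative finite, use property (P) to get $b^{n}=b$ and the central idempotent $e=b^{n-1}$ with $b^{2n-1}=b$, and then from $a=rb$ conclude $0=ae=ab^{2n-2}=rb^{2n-1}=rb=a$. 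Thus $R$ has no nonzero square-zero element; and since for any nilpotent $a$ of minimal nilpotency index $k\geq 2$ the element $a^{k-1}$ is a nonzero square-zero element, this already forces $Nil(R)=0$, i.e.\ $R$ is reduced. With ``reduced $\Rightarrow$ (2)'' this gives (2) $\Leftrightarrow$ reduced, completing the equivalence.

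I expect the only genuine obstacle to be isolating which hypothesis of Proposition \ref{p2.23} is really in play in the Armendariz setting: once one sees that abelian-ness is the sole structural input beyond semi-p.p and property (P), both nontrivial directions collapse onto the single conclusion that $R$ is reduced, and the equivalence of (1) and (2) drops out. A minor point to verify carefully is that treating only the case $a^{2}=0$ is enough to conclude reducedness, which is the standard reduction $a\mapsto a^{k-1}$ indicated above.
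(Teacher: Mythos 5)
Your proposal is correct, but the route you take for the substantive direction differs from the paper's. You funnel both (1) and (2) through the single statement ``$R$ is reduced'': for (1) you note semicommutative $\Rightarrow$ $\mathscr{H}$-semicommutative and quote Proposition \ref{p2.23}; for (2) you observe that the proof of Proposition \ref{p2.23} uses $\mathscr{H}$-semicommutativity only to get abelian-ness, that Armendariz rings are abelian, and then rerun the annihilator argument $l_R(a)=Rb$, $b^n=b$, $a=rb$, $ab^{2n-2}=0=a$ to kill square-zero elements. The paper handles (1) $\Rightarrow$ (2) the same way you do (semicommutative plus semi-p.p plus property (P) forces reduced, hence Armendariz), but for (2) $\Rightarrow$ (1) it never passes through reducedness: given $ab=0$ it writes $r_R(a)=cR$ with $c^n=c$, uses abelian-ness to make $c^{n-1}$ a central idempotent with $bc^{2n-2}=b$, forms the polynomials $f(x)=ak+ax$ and $g(x)=kb-bx$, checks $f(x)g(x)=ak^2b=ac^{2n-2}k^2b=0$, and invokes the Armendariz condition on the coefficient products to extract $akb=0$ for all $k$, i.e.\ $aRb=0$ directly. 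Your version buys a cleaner, stronger statement (both conditions are equivalent to reducedness under the hypotheses) and avoids any polynomial manipulation; it also sidesteps a slip in the paper's write-up, which at one point justifies abelian-ness by saying ``$R$ is semicommutative'' in the very direction where semicommutativity is the conclusion (it should read ``Armendariz implies abelian,'' exactly the fact you supply). The paper's version is the classical Armendariz-style insertion argument and delivers $aRb=0$ without the detour. Your auxiliary reductions --- that Armendariz rings are abelian via the idempotent test polynomials, and that absence of nonzero square-zero elements implies reducedness via $a\mapsto a^{k-1}$ --- are both standard and correctly deployed.
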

 \begin{proof}
 (1)$\Rightarrow$(2) Since $R$ is {\it semicommutative} implies $R$ is reduced  and thus $R$ is Armendariz. (2)$\Rightarrow$(1) let us assume that $ab=0$ for some $a, \, b \in R$. This implies $b \in r_{R}(a)=cR$, where $c^{n}=c$. Then this implies $b=cr$. Again $c^{(n-1)^{2}}=c^{(n-1)}$. Thus $c^{n-1}$ is idempotent. Since $R$ is {\it semicommutative} implies $R$ is abelian,  multiplying  $b=cr$ by $c^{(2n-2)}$ from the left side we get $c^{(2n-2)}b=bc^{(2n-2)}=c^{(2n-1)}r=cr=b$. Now let $f(x)=ak+ax$ and $g(x)=kb-bx \in R[x]$.  We get $f(x)g(x)=(ak+ax)(kb-bx)=ak^{2}b=ak^{2}bc^{(2n-2)}=ak^{2}c^{(2n-2)}b=ac^{(2n-2)}k^{2}b=0$. By hypothesis $R$ is Armendariz, thus $akb=0$ for all $k \in R$. Hence $R$ is {\it semicommutative}.
 \end{proof}
 \begin{corollary}
For a p.p ring $R$, the following assertions are equivalent:
 \begin{itemize}
 \item[(1)] $R$ is semicommutative ring.
 \item[(2)] $R$ is Armendariz ring.
 \end{itemize}
 \end{corollary}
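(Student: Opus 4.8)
The plan is to obtain this as an immediate consequence of the preceding Proposition, the only task being to verify that a p.p ring satisfies its two hypotheses. First I would recall that a (two-sided) p.p ring is by definition both left and right p.p, so that for each $a \in R$ the annihilators have the form $l_{R}(a) = Re$ and $r_{R}(a) = fR$ for suitable idempotents $e, f \in R$. Since any idempotent $e$ has $e^{2}=e$, the set $\{e^{t} \,|\, t \in \mathbb{N}\cup\{0\}\} = \{1, e\}$ is finite, so $e$ is multiplicatively finite; writing $Re = Rb$ with $b = e$ then exhibits $R$ as a left semi-p.p ring, and symmetrically as a right semi-p.p ring. Hence every p.p ring is a semi-p.p ring.

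Next I would check property (P) for the elements that actually occur. An idempotent $e$ satisfies $e^{n}=e$ for every $n \geq 1$, in particular for some $n \geq 2$, so every idempotent satisfies property (P). Because in a p.p ring the one-sided annihilators $l_{R}(a)$ and $r_{R}(a)$ are generated precisely by idempotents, the generator $c$ appearing in the proof of the Proposition—where one writes $r_{R}(a) = cR$ with $c^{n}=c$—may be taken to be an idempotent, for which property (P) is automatic. With both the semi-p.p hypothesis and the relevant instance of property (P) in force, the equivalence of semicommutativity and the Armendariz condition follows directly from the Proposition.

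The one point requiring care, which I expect to be the main (and essentially only) obstacle, is that the Proposition literally asks that \emph{every} multiplicatively finite element satisfy property (P), whereas a p.p ring guarantees this only for idempotents—indeed a nonzero square-zero element is multiplicatively finite yet fails (P). I would resolve this by observing that the proof of the Proposition invokes property (P) solely for the idempotent generators of the annihilators $r_{R}(a)$ (resp.\ $l_{R}(a)$), and for those it holds automatically; no use is ever made of (P) for an arbitrary multiplicatively finite element. This is exactly the content of the remark that every idempotent is a multiplicatively finite element satisfying (P), and it is the step I would spell out to justify reducing the corollary to the Proposition.
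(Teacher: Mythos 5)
Your proposal is correct and matches the paper's intended argument: the corollary is left without an explicit proof, but (as with Corollary \ref{p4}) the intended justification is precisely that a p.p ring is a semi-p.p ring whose annihilator generators are idempotents, and every idempotent satisfies $e^{n}=e$ for $n=2$, so the preceding Proposition applies. Your additional observation—that property (P) is only ever invoked for the idempotent generators of $r_{R}(a)$ and $l_{R}(a)$, not for arbitrary multiplicatively finite elements—is a worthwhile clarification of a point the paper glosses over, but it does not change the route.
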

 \begin{corollary}
 Let $R$ be a semi-p.p ring. If every multiplicative finite element satisfy the property (P), then the following assertions are equivalent:
 \begin{itemize}
 \item[(1)] $R$ is $\mathscr{H}$-semicommutative ring.
 \item[(2)] $R$ is Armendariz ring.
 \end{itemize}
 \end{corollary}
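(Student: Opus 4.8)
The plan is to prove the two implications separately, in each case chaining together results already established in this section so that essentially no new computation is required.

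For (1) $\Rightarrow$ (2), I would observe that the hypotheses of Proposition \ref{p2.23} are met verbatim: $R$ is assumed $\mathscr{H}$-semicommutative, $R$ is a (left or right) semi-p.p ring, and every multiplicative finite element satisfies property (P). Hence Proposition \ref{p2.23} immediately yields that $R$ is reduced. Since every reduced ring is Armendariz (as recalled at the start of Section~2), I conclude that $R$ is Armendariz. For (2) $\Rightarrow$ (1), I would invoke the immediately preceding proposition, which asserts that for a semi-p.p ring in which every multiplicative finite element satisfies property (P), being Armendariz is equivalent to being semicommutative. Thus the assumption that $R$ is Armendariz gives that $R$ is semicommutative. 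Finally, since $uv=0$ forces $uRv=0\subseteq T(R)$, every semicommutative ring is $\mathscr{H}$-semicommutative, and so $R$ is $\mathscr{H}$-semicommutative.

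The step requiring the most care is not a calculation but the bookkeeping of hypotheses: I must confirm that the phrase ``multiplicative element satisfies property (P)'' appearing in Proposition \ref{p2.23} and the phrase ``multiplicative finite element satisfies property (P)'' in the preceding proposition name the very same condition, so that both cited results are genuinely applicable under the single blanket hypothesis stated in this corollary. Since property (P) reads $a^{n}=a$ for some $n\geq 2$, any element satisfying it is automatically multiplicatively finite, so the two phrasings coincide and the matching is harmless. Once this is in place, the corollary follows formally, and it may be read as a direct consequence of Proposition \ref{p2.23}, the preceding proposition, and the two elementary containments reduced $\Rightarrow$ Armendariz and semicommutative $\Rightarrow$ $\mathscr{H}$-semicommutative.
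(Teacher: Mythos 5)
Your proposal is correct and is exactly the intended derivation: the paper states this corollary without proof precisely because it follows by combining Proposition \ref{p2.23} (giving $\mathscr{H}$-semicommutative $+$ semi-p.p $+$ property (P) $\Rightarrow$ reduced $\Rightarrow$ Armendariz) with the immediately preceding proposition (giving Armendariz $\Rightarrow$ semicommutative $\Rightarrow$ $\mathscr{H}$-semicommutative under the same hypotheses). Your remark reconciling the slightly different phrasings of the property (P) hypothesis is a fair reading of what the paper clearly intends.
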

 \begin{proposition}
 Let $R$ be  a $\mathscr{H}$-semicommutative. If every multiplicative finite element satisfy the property (P),  then the  following assertion  holds. 
 \begin{itemize}
 \item[(1)] If $R$ is semi-Baer ring then $R[x]$ is semi-Baer ring.
\item[(2)] If $R$ is semi-quasi Baer ring then $R[x]$ is semi quasi Baer ring.
\item[(3)] If $R$ is semi p.p ring then $R[x]$ is semi-p.p ring.
\item[(4)] If $R$ is semi-p.q ring then $R[x]$ is semi-p.q ring.
  \end{itemize}
 \end{proposition}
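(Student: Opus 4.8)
The plan is to reduce every one of the four implications to the reduced case, where the prefix ``semi-'' becomes redundant and the transfer to $R[x]$ is already available. First I would observe that each hypothesis on $R$ forces $R$ to be left (or right) semi-p.p or semi-p.q: a semi-Baer ring is semi-p.p upon specializing the subset $X$ to a singleton $\{a\}$, and a semi-quasi-Baer ring is semi-p.q upon specializing the left ideal to a principal one $Ra$. Since $R$ is $\mathscr{H}$-semicommutative and every multiplicatively finite element satisfies property (P), Proposition \ref{p2.23} together with its semi-p.q analogue then forces $R$ to be reduced in all four cases. In particular $R$ is Armendariz, so by Proposition \ref{p10} the ring $R[x]$ is again $\mathscr{H}$-semicommutative, and since a polynomial ring over a reduced ring is reduced, $R[x]$ is reduced as well.

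Next I would show that, under property (P), the ``semi-'' notions collapse to the ordinary ones. If $l_R(X) = Rb$ with $b \in S$, then $b^n = b$ for some $n \geq 2$, whence $e := b^{n-1}$ satisfies $e^2 = b^{2n-2} = b^{n-1} = e$ and $b = b\cdot b^{n-1} = be$; thus $e = b^{n-2}b \in Rb$ and $b = be \in Re$, so $Rb = Re$ with $e$ an idempotent. Hence $l_R(X) = Re$, which is exactly the ordinary Baer (resp.\ quasi-Baer, p.p, p.q-Baer) condition. Conversely, every idempotent is multiplicatively finite of order at most $2$ and so lies in $S$, giving the reverse direction. Therefore, under our hypotheses, semi-Baer $=$ Baer, semi-quasi-Baer $=$ quasi-Baer, semi-p.p $=$ p.p, and semi-p.q $=$ p.q-Baer for $R$; moreover the trivial direction ``ordinary $\Rightarrow$ semi'' holds for any ring, in particular for $R[x]$.

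Finally I would chain these facts: the second paragraph converts each hypothesis into the corresponding ordinary property on $R$; the proposition immediately preceding this one, applicable because $R$ is $\mathscr{H}$-semicommutative, transfers that property to $R[x]$ (equivalently, since $R$ is reduced and hence Armendariz, the classical Armendariz-type extension theorems apply directly); and the trivial implication ``ordinary $\Rightarrow$ semi'' then yields the ``semi-'' property on $R[x]$, establishing (1)--(4). The main obstacle I anticipate is the middle step, namely verifying cleanly that property (P) permits replacing the multiplicatively finite generator $b$ of an annihilator by a genuine idempotent $e$ with $Rb = Re$, and that this identification runs in both directions so that the semi and ordinary notions genuinely coincide rather than merely implying one another. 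Everything else is a specialization of the hypotheses, an appeal to Proposition \ref{p2.23}, or a citation of the preceding proposition.
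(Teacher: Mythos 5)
Your proposal is correct and follows the same basic strategy as the paper --- use Proposition \ref{p2.23} (and its semi-p.q analogue) to conclude that $R$ is reduced, then pass to $R[x]$ --- but you supply a step that the paper's own proof skips over and actually needs. The paper's argument for (1) reads, in full: semi-Baer implies semi-p.p, hence $R$ is reduced, hence $R[x]$ is reduced, ``and hence $R[x]$ is semi-Baer ring,'' with (2)--(4) dismissed as direct consequences. As literally written the last inference is a non sequitur, since a reduced polynomial ring need not be (semi-)Baer; what is really required is exactly your middle paragraph, namely that under property (P) the multiplicatively finite generator $b$ of the annihilator can be replaced by the idempotent $e=b^{n-1}$ with $Rb=Re$, so that the semi-notions on $R$ coincide with the ordinary Baer, quasi-Baer, p.p and p.q-Baer conditions, after which the preceding proposition (equivalently, the classical Armendariz/reduced-ring transfer theorems, applicable because $R$ is reduced hence Armendariz) moves the ordinary property to $R[x]$, and the trivial implication ``ordinary $\Rightarrow$ semi'' finishes. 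Your computation $e^2=b^{2n-2}=e$, $b=be\in Re$, $e=b^{n-2}b\in Rb$ is exactly the bookkeeping needed, and it mirrors the manipulation the paper itself performs inside the proof of Proposition \ref{p2.23}. So your write-up is not a different method so much as a completed version of the paper's sketch; the only caution is that the transfer step still leans on the unproved proposition immediately preceding this one (or on the external Armendariz-type extension theorems), a dependence the paper shares.
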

 \begin{proof}
 Let $R$ be semi-Baer ring, this implies $R$ is  semi-p.p ring. Hence by Proposition \ref{p4}, $R$ is reduced and this implies  $R[x]$ is reduced and hence $R[x]$ is semi-Baer ring. The other parts are direct implication of Corollary \ref{p4}
 \end{proof}

 \section{Conflict of Interest}
 On behalf of the authors the corresponding author declares no confict of interest.

\bibliographystyle{llltbibsty}
 \bibliography{h-semi}
\end{document}